    \newcommand\contFrac{\@ifstar{\@contFracStar}{\@contFracNoStar}}
    \def\singleContFrac#1#2{%
        \begin{array}{@{}c@{}}%
            \multicolumn{1}{c|}{#1}%
            \\%
            \hline%
            \multicolumn{1}{|c}{#2}%
        \end{array}%
    }
    \def\@contFracNoStar#1{%
        \mathchoice{
            \@contFracNoStarDisplay@#1//\@nil%
        }{
            \@contFracNoStarInline@#1//\@nil%
        }{
            \@contFracNoStarInline@#1//\@nil%
        }{
            \@contFracNoStarInline@#1//\@nil%
        }%
    }
    \def\@contFracNoStarDisplay@#1//#2\@nil{%
        \@ifmtarg{#2}{%
            #1%
        }{%
            #1+\cfrac{1}{\@contFracNoStarDisplay@#2\@nil}%
        }%
    }
        \def\@contFracNoStarInline@#1//#2\@nil{%
            \@ifmtarg{#2}{%
                #1%
            }{%
                #1 \@@contFracNoStarInline@@#2\@nil%
            }%
        }
        \def\@@contFracNoStarInline@@#1//#2\@nil{%
            \@ifmtarg{#2}{%
                + \singleContFrac{1}{#1}%
            }{%
                + \singleContFrac{1}{#1} \@@contFracNoStarInline@@#2\@nil%
            }%
        }
    \def\@contFracStar#1{%
        \mathchoice{
            \@contFracStarDisplay@#1////\@nil%
        }{
            \@contFracStarInline@#1//\@nil%
        }{
            \@contFracStarInline@#1//\@nil%
        }{
            \@contFracStarInline@#1//\@nil%
        }%
    }
    \def\@contFracStarDisplay@#1//#2//#3\@nil{%
        \@ifmtarg{#2}{%
            #1%
        }{%
            #1 + \cfrac{#2}{\@contFracStarDisplay@#3\@nil}%
        }%
    }
        \def\@contFracStarInline@#1//#2\@nil{%
            \@ifmtarg{#2}{%
                #1%
            }{%
                #1 \@@contFracStarInline@@#2\@nil%
            }%
        }
        \def\@@contFracStarInline@@#1//#2//#3\@nil{%
            \@ifmtarg{#3}{%
                - \singleContFrac{#1}{#2}%
            }{%
                - \singleContFrac{#1}{#2} \@@contFracStarInline@@#3\@nil%
            }%
        }
\theoremstyle{plain}
\newtheorem{theorem}{Theorem}[section]
\newtheorem{lemma}[theorem]{Lemma}
\newtheorem{proposition}[theorem]{Proposition}
\theoremstyle{definition}
\theoremstyle{remark}
\newtheorem{remark}[theorem]{Remark}
\newtheorem*{remark*}{Remark}
\numberwithin{equation}{section}
\newcommand\D{\displaystyle}
\newcommand\ZZ{{\mathbb Z}}
\newcommand\p{\mbox{$\mathfrak{p}$}}
\title[Bilateral birth-death chains and associated Jacobi polynomials]{The bilateral birth-death chain generated by \\ the associated Jacobi polynomials}
\author{Manuel D. de la Iglesia}
\address{Manuel D. de la Iglesia\\
Instituto de Matem\'aticas, Universidad Nacional Aut\'onoma de M\'exico, Circuito Exterior, C.U., 04510, Ciudad de M\'exico, M\'exico.}
\email{mdi29@im.unam.mx}
\author{Claudia Juarez}
\address{Claudia Juarez\\
Instituto de Investigaciones en Matem\'aticas Aplicadas y en Sistemas, Universidad Nacional Aut\'onoma de M\'exico, Circuito Escolar 3000, C.U., 04510, Ciudad de M\'exico, M\'exico.}
\email{claudiajrz@sigma.iimas.unam.mx}
\date{\today}
\thanks{
}
\thanks{This work was partially supported by PAPIIT-DGAPA-UNAM grant IN106822 (M\'exico) and CONACYT grant A1-S-16202 (M\'exico).}
\date{\today}
\subjclass[2010]{60J10, 33C45, 42C05}
\keywords{Bilateral birth-death chains. Associated Jacobi polynomials. Darboux transformations. Orthogonal polynomials. Urn models.}
\begin{document}

\maketitle

\begin{abstract}
We give a probabilistic interpretation of the associated Jacobi polynomials, which can be constructed from the three-term recurrence relation for the classical Jacobi polynomials by shifting the integer index $n$ by a real number $t$. Under certain restrictions, this will give rise to a doubly infinite tridiagonal stochastic matrix which can be interpreted as the one-step transition probability matrix of a discrete-time bilateral birth-death chain with state space on $\mathbb{Z}$. We also study the unique UL and LU stochastic factorizations of the transition probability matrix, as well as the discrete Darboux transformations and corresponding spectral matrices. Finally, we use all these results to provide an urn model on the integers for the associated Jacobi polynomials.
\end{abstract}

\section{Introduction}

The connection between discrete or continuous time birth-death chains and orthogonal polynomials is very well-known. This started in a series of papers by S. Karlin and J. McGregor \cite{KMc2, KMc3, KMc6} in the 1950s, where they established an important connection between the corresponding transition probability functions by means of a spectral representation, the so-called \emph{Karlin-McGregor integral representation formula}. This formula arises from the fact that the one-step transition probability matrix $P$ of a discrete-time birth-death chain is a tridiagonal (or Jacobi) matrix, so we can apply Favard's or the spectral theorem to find the corresponding spectral measure associated with the process. Many probabilistic aspects can be analyzed in terms of the corresponding orthogonal polynomials, such as $n$-step transition probabilities, the invariant measure or recurrence of the process, among other applications. In the last 60 years, many authors like M.E.H. Ismail, G. Valent, P. Flajolet, F. Guillemin, H. Dette or E. van Doorn, to mention a few, have studied this connection and other probabilistic aspects. For a detailed account of all these relations see the recent monograph \cite{MDIB}.

The spectral analysis of birth-death chains is typically performed on the state space of nonnegative integers $\mathbb{Z}_{\geq0}$. However, there are some situations in physics, chemistry or engineering where the state space is the set of all integers $\mathbb{Z}$ (see \cite{Con,dCIM,PL,TB}). These processes are usually known as \emph{bilateral birth-death chains} (other names like unrestricted birth-death chains or double-ended systems can also be found in the literature). The spectral analysis of discrete-time bilateral birth-death chains was firstly studied by S. Karlin and J. McGregor in the last section of \cite{KMc6}, while the case of continuous-time was considered later by W.E. Pruitt in \cite{PruT,Pru}. In this case the one-step transition probability matrix $P$ is a doubly infinite Jacobi matrix. The application of the spectral theorem will give rise now to a $2\times2$ matrix of measures which is usually called the \emph{spectral matrix} of the bilateral birth-death chain. As in the case of regular birth-death chains there will be an integral representation of the $n$-step transition probability matrix $P^{(n)}$ in terms of this spectral matrix and the corresponding (two families of linearly independent) orthogonal polynomials.

Unlike the case of birth-death chains on $\mathbb{Z}_{\geq0}$, where the spectral measure and the corresponding orthogonal polynomials of many examples have been studied  (see \cite[Chapter 2]{MDIB}), there are few explicit examples of bilateral birth-death chains where the spectral matrix and the corresponding orthogonal polynomials have been explicitly computed (see \cite{KMc6,G1,dIJ1,dIJ2} for discrete-time examples and \cite{ILMV,dI3} for continuous-time examples). The examples studied so far are variations of random walks on $\mathbb{Z}$, where the transition probabilities are constant. The purpose of this paper is to study for the first time the spectral analysis of a nontrivial discrete-time bilateral birth-death chain, where the transition probabilities are rational functions depending on the state of the chain, instead of constants. This chain will be given by the doubly infinite Jacobi matrix for the so-called associated Jacobi polynomials.

The subject of associated orthogonal polynomials (for $n\in\mathbb{Z}_{\geq0}$) is very classical. It was first considered in \cite{BarDic,Pal} and then in a paper by D. Askey and J. Wimp \cite{AWi}, where the spectral measures for associated Laguerre and Hermite polynomials were computed. The case of associated Jacobi polynomials was considered by J. Wimp in \cite{Wi}. Later works on the subject can also be found in \cite{ILVW,IM,IR}. The idea is very simple: consider the polynomials satisfying the three-term recurrence relation for the classical families of Hermite, Laguerre, Jacobi or Bessel polynomials, but replacing $n$ by $n+t$, where $t$ is an arbitrary real parameter. This construction can be extended to the case where $n\in\mathbb{Z}$, in which case we have a doubly infinite Jacobi matrix. This was considered by F.A. Gr\"unbaum and L. Haine in \cite{GH0}, where all the solutions of the \emph{bispectral problem} or Bochner's problem (see Remark \ref{rembis} below for more information) are obtained for the doubly infinite tridiagonal matrices corresponding to the associated Hermite, Laguerre, Jacobi or Bessel polynomials, and the corresponding differential operator has order two. In \cite{GH0} an extra parameter $t$ appears by allowing the translation of $n$ by $n+t$. These are the simplest evolutions of these objects away from their value at $t=0$.

Although a complete classification of the bispectral problem for this extension of the Bochner's problem was given in \cite{GH0}, very little is known about the spectral matrices for these associated polynomials. For instance in \cite{MR} some of this topics are considered but no computation of the spectral matrix was given. However, in \cite[Theorem 1]{GH}, F.A. Gr\"unbaum and L. Haine managed to compute the explicit expression of the spectral matrix for the associated Jacobi polynomials supported on $[0,1]$ for some special choice of the parameters involved. This will be the starting point of our paper. The fact that we already have an explicit expression of this spectral matrix will be the key to have a Karlin-McGregor representation formula for the $n$-step transition probability matrix of the corresponding bilateral birth-death chain. In the last few years, some more examples of doubly infinite Jacobi matrices have been analyzed (see \cite{IsmS,DIW,dIJ1,dIJ2,dI3}), computing the corresponding spectral matrices.

This paper is organized as follows. In Section \ref{sec2} we will recall some of the results contained in \cite{GH}, especially the explicit expression of the spectral matrix for the associated Jacobi polynomials. In Section \ref{sec3} we will turn the doubly infinite matrix given in \cite{GH} into a stochastic matrix $P$. For that we will analyze in Proposition \ref{prop1} under what circumstances we can choose the real parameter $t$ such that $P$ is a stochastic matrix. A special remark about the bispectrality of the matrix-valued orthogonal polynomials constructed from $P$ will be given at the end of this section. In Section \ref{sec4} we will use some results of \cite{dIJ1} (see also \cite{GdI3}) in order to obtain stochastic UL and LU factorizations of the transition probability matrix $P$. These factorizations typically come with a free parameter (see \cite{dIJ1}), but we will show that, in the case of associated Jacobi polynomials, these stochastic factorizations are \emph{unique} under certain assumption of the parameters (see Theorems \ref{thm1} and \ref{thm2}). In Section \ref{sec5} we consider discrete Darboux transformations of the UL or LU factorizations of $P$, consisting of inverting the order of the factors. Using again some results of \cite{dIJ1} we will be able to identify the spectral matrices associated with these Darboux transformations in terms of the so-called \emph{Geronimus transformations} of the original spectral matrix. Finally, in Section \ref{sec6}, we give an urn model for the associated Jacobi polynomials. This model is different from the ones considered for instance in \cite{G4,GdI3,GdI4}, since in this case we will allow the state space of the urn model to run along the whole set of integers $\mathbb{Z}$.

\section{The spectral matrix of the associated Jacobi polynomials}\label{sec2}
In this section we recall some of the results obtained F.A. Gr\"unbaum and L. Haine in \cite{GH}, where they showed the connection between a solution of a discrete-continuous version of the bispectral problem with the concept of associated polynomials. They also obtained an explicit expression of the spectral matrix corresponding to the associated Jacobi polynomials, a tool that will be extensively used in the rest of the paper. 

In \cite{GH}, the authors use parameters $a,b,c$ (according to the standard notation of the Gauss hypergeometric equation) with certain restrictions. In this paper we will use parameters $\alpha,\beta,t$ (a notation more appropriate for Jacobi polynomials) subject to the following relations
\begin{equation*}
\begin{split}
a&=\alpha+\beta+t+1,\\
b&=-t,\\
c&=\alpha+1.
\end{split}
\end{equation*}
For $n\in\mathbb{Z}$ let
\begin{equation}\label{TTRRcn}
\begin{split}
a_n&=\frac{(n+t)(n+t+\alpha)(n+t+\beta)(n+t+\alpha+\beta)}{(2n+2t+\alpha+\beta-1)(2n+2t+\alpha+\beta)^2(2n+2t+\alpha+\beta+1)},\\
b_n&=\frac{1}{2}\left(1+\frac{\alpha^2-\beta^2}{(2n+2t+\alpha+\beta-2)(2n+2t+\alpha+\beta)}\right).
\end{split}
\end{equation}
From these coefficients we can define two families of linearly independent polynomials $(P_n^\eta)_{n\in\mathbb{Z}}, \eta=1,2,$ by the three-term recurrence relation with initial conditions
\begin{align*}
\nonumber P_0^1(x)&=1,\quad P_{0}^2(x)=0,\\
\label{TTRRZP}P_{-1}^1(x)&=0,\quad P_{-1}^2(x)=1,\\
\nonumber xP_n^{\eta}(x)&=\sqrt{a_{n+1}}P_{n+1}^{\eta}(x)+b_{n+1}P_n^{\eta}(x)+\sqrt{a_n}P_{n-1}^{\eta}(x),\quad n\in\ZZ,\quad\eta=1,2.
\end{align*}
These two families of polynomials form the eigenvectors of the eigenvalue equation $ x p^\eta(x) = J p^\eta(x) $ where $p^\eta(x)=(\cdots, P_{-1}^\eta(x),P_0^\eta(x),P_1^\eta(x),\cdots)^T, \eta=1,2$, and $J$ is the doubly infinite Jacobi matrix 
\begin{equation}\label{JZ}
J=
\left(
\begin{array}{cccc|cccc}
\ddots&\ddots&\ddots&&&\\
&\sqrt{a_{-2}}&b_{-1}&\sqrt{a_{-1}}&&&\\
&&\sqrt{a_{-1}}&b_{0}&\sqrt{a_0}&&&\\
\hline
&&&\sqrt{a_0}&b_1&\sqrt{a_1}&&\\
&&&&\sqrt{a_1}&b_2&\sqrt{a_2}&\\
&&&&&\ddots&\ddots&\ddots
\end{array}
\right).
\end{equation}
This operator $J$ was called in \cite{GH} the \emph{associated Jacobi matrix}. In order to apply the spectral theorem we need to assume that $a_n>0$ for all $n\in\mathbb{Z}$. As it was pointed out in \cite{GH} it is enough to assume that $\alpha$ and $\beta$ are inside the square $-1<\alpha,\beta<1$. In that case it will be possible to find some $t$ such that positivity is ensured (more on this in the next section). Outside of this square it is possible to see that there is no value of $t$ that makes all $a_n$ positive. In \cite[Theorem 1]{GH} it is computed explicitly the $2\times2$ spectral matrix $\bm W$ supported on $[0,1]$ such that $(P_n^\eta)_{n\in\mathbb{Z}}, \eta=1,2$ are \emph{orthonormal} in the following sense
$$
\int_0^1\begin{pmatrix} P_n^1(x),&\hspace{-0.3cm}P_n^2(x)\end{pmatrix}\bm W(x)\begin{pmatrix} P_m^1(x)\\P_m^2(x)\end{pmatrix}dx=\delta_{nm},\quad n,m\in\mathbb{Z}.
$$
Under the conditions on the parameters $\alpha,\beta,t$ that ensures positivity of all $a_n$, it turns out that the Jacobi matrix $J$ has no discrete spectrum. Therefore the spectral matrix will be given by only an absolutely continuous part 
\begin{equation}\label{WW}
\bm W(x)=x^\alpha(1-x)^\beta
\begin{pmatrix} \Sigma_{11}(x)&\Sigma_{12}(x)\\\Sigma_{12}(x)&\Sigma_{22}(x)\end{pmatrix},\quad x\in[0,1],
\end{equation}
where
\begin{equation*}
\begin{split}
\Sigma_{11}(x)&=\gamma L\left(G_1^2(x)-\mu K^2x^{-2\alpha}G_2^2(x)\right),\\
\Sigma_{12}(x)&=-L\left(G_1(x)G_3(x)-\mu\nu K^2x^{-2\alpha}G_2(x)G_4(x)\right),\\
\Sigma_{22}(x)&=\frac{L}{\gamma}\left(G_3^2(x)-\mu\nu^2 K^2x^{-2\alpha}G_4^2(x)\right),
\end{split}
\end{equation*}
and
\begin{align*}
G_1(x)&={}_2 F_1\left(\alpha+\beta+t+1,-t; \alpha+1;x\right),\quad G_2(x)={}_2 F_1\left(\beta+t+1,-\alpha-t; 1-\alpha;x\right),\\
G_3(x)&={}_2 F_1\left(\alpha+\beta+t,1-t; \alpha+1;x\right),\quad\;\;\; G_4(x)={}_2 F_1\left(\beta+t,1-t-\alpha; 1-\alpha;x\right),\\
\mu&=\frac{\sin(\pi t)\sin(\pi(\beta+t))}{\sin(\pi(\alpha+\beta+t))\sin(\pi(\alpha+t))},\quad \nu=\frac{(\alpha+t)(\alpha+\beta+t)}{t(\beta+t)},\\
K&=-\frac{\Gamma(\alpha)\Gamma(\alpha+1)\Gamma(t+1)\Gamma(-\alpha-\beta-t)\sin(\pi\alpha)\sin(\pi(\alpha+\beta+t))}{\pi\Gamma(\alpha+t+1)\Gamma(-\beta-t)\sin(\pi(\beta+t))},\\
L&=\frac{t(\beta+t)\sin(\pi\alpha)}{\pi\sqrt{a_0}(\alpha+\beta+2t)\alpha(\mu-1)K},\quad\gamma=\frac{(\alpha+t)(\alpha+\beta+t)}{\sqrt{a_0}(\alpha+\beta+2t-1)(\alpha+\beta+2t)}.
\end{align*}
where $a_0$ can be obtained from \eqref{TTRRcn} and $_2F_1$ denotes the standard Gauss hypergeometric function.

Another way of representing the orthonormality of $(P_n^\eta)_{n\in\mathbb{Z}}, \eta=1,2,$ is using the theory of matrix-valued orthogonal polynomials, since a doubly infinite Jacobi matrix like in \eqref{JZ} can be viewed as a semi-infinite $2\times2$ block Jacobi matrix (see \cite{Ber}). Indeed, after the new labeling of the indices $n$
\begin{equation}\label{relab}
\{0,1,2,\ldots\}\to\{0,2,4,\ldots\},\quad\mbox{and}\quad\{-1,-2,-3,\ldots\}\to\{1,3,5,\ldots\},
\end{equation}
all the information of $J$ can be collected in a semi-infinite block tridiagonal matrix $ \bm J $ of the form
\begin{equation*}
\bm J=\begin{pmatrix}
B_1&A_1&&&\\
A_1&B_2&A_2&&\\
&A_2&B_3&A_3&\\
&&\ddots&\ddots&\ddots
\end{pmatrix},
\end{equation*}
where
\begin{align*}
B_1&=\begin{pmatrix} b_1 & \sqrt{a_0}\\\sqrt{a_{0}} & b_{0}\end{pmatrix},\quad B_{n+1}=\begin{pmatrix} b_{n+1} & 0\\ 0 & b_{-n}\end{pmatrix},\quad n\geq1,\quad A_{n+1}=\begin{pmatrix} \sqrt{a_{n+1}} & 0\\ 0 & \sqrt{a_{-n-1}}\end{pmatrix},\quad n\geq0.
\end{align*}
If we define the matrix-valued polynomials
\begin{equation}\label{2QMMP}
\bm P_n(x)=\begin{pmatrix} P_n^1(x) & P_n^2(x) \\ P_{-n-1}^1(x) & P_{-n-1}^2(x)\end{pmatrix},\quad n\geq0,
\end{equation}
then we have
\begin{align*}
x\bm P_0(x)&=A_1\bm P_{1}(x)+B_1\bm P_0(x),\quad \bm P_0(x)=I_{2\times2},\\
x\bm P_n(x)&=A_{n+1}\bm P_{n+1}(x)+B_{n+1}\bm P_n(x)+A_n\bm P_{n-1}(x),\quad n\geq1,
\end{align*} 
where $I_{2\times2}$ denotes the $2\times2$ identity matrix. The matrix orthonormality is defined in terms of the (matrix-valued) inner product
\begin{equation*}\label{ortoZ3P}
\int_{0}^{1}\bm P_n(x)\bm W(x)\bm P_m^*(x)dx=I_{2\times2}\delta_{nm},
\end{equation*}
where $M^*$ denotes the Hermitian transpose of a matrix $M$.
\begin{remark}
Observe that in \cite{GH} the authors use a different representation of the matrix-valued orthonormal polynomials $\bm P_n$ in \eqref{2QMMP}. It is easy to see that both notations are connected by a conjugation of the $\sigma_1$ Pauli matrix (of size $2\times2$). Thus the spectral matrix in \eqref{WW} has been modified according to our notation.
\end{remark}
\begin{remark}\label{rem1}
Observe that if we denote by $\tilde a_n,\tilde b_n, n\geq1,$ the coefficients of the three-term recurrence relation for the classical Jacobi orthonormal polynomials on the interval $[0,1]$, we have that $a_n=\tilde a_{n+t}$ and $b_n=\tilde b_{n+t}$, for $n\in\mathbb{Z}$, something that it was already pointed out in \cite{GH}.
\end{remark}

\begin{remark}\label{rembis}
In \cite{GH0}, a complete solution of the discrete-continuous version of the following bispectral problem was given: describe all families of \emph{functions} $f_n(z), n\in\mathbb{Z}, z\in\mathbb{C}$, that satisfy
$$
(Jf)_n(z)=zf_n(z),\quad\mbox{and}\quad Bf_n(z)=\lambda_nf_n(z),\quad\mbox{for all $z$ and $n$,}
$$
where $J$ is a doubly infinite Jacobi matrix like in \eqref{JZ} and $B$ is a second-order differential operator with coefficients independent of $n$.
In the case of associated Jacobi polynomials, $J$ is the Jacobi matrix \eqref{JZ} with coefficients \eqref{TTRRcn} and the functions $f_n(z)$ can be given in terms of an arbitrary solution of Gauss' hypergeometric equation (see (2.1)--(2.6) of \cite{GH}). The corresponding second-order differential operator $B$ and eigenvalue $\lambda_n$ are given by
$$
B=z(1-z)\frac{d^2}{dz^2}+(\alpha+1+(\alpha+\beta+2)z)\frac{d}{dz},\quad \lambda_n=-(n+t)(n+t+\alpha+\beta+1).
$$
We will get back to the notion of bispectrality at the end of Section \ref{sec3}, where we will show that the corresponding matrix-valued orthogonal polynomials $\bm P_n$ in \eqref{2QMMP} also have the bispectral property.
\end{remark}

\section{Stochastic associated Jacobi matrix}\label{sec3}

We are now interested in turning the doubly infinite Jacobi matrix $J$ in \eqref{JZ} into a stochastic matrix $P$. For that we will follow Remark \ref{rem1} and \cite[Section 5]{GdI3} (see also \cite{G4}). For $n\in\mathbb{Z}$ let
\begin{equation}\label{coefTTRR}
\begin{split}
p_n&=\frac{(n+t+\beta+1)(n+t+\alpha+\beta+1)}{(2n+2t+\alpha+\beta+1)(2n+2t+\alpha+\beta+2)},\\
r_n&=\frac{(n+t+\beta+1)(n+t+1)}{(2n+2t+\alpha+\beta+1)(2n+2t+\alpha+\beta+2)}+\frac{(n+t+\alpha)(n+t+\alpha+\beta)}{(2n+2t+\alpha+\beta)(2n+2t+\alpha+\beta+1)},\\
q_n&=\frac{(n+t)(n+t+\alpha)}{(2n+2t+\alpha+\beta)(2n+2t+\alpha+\beta+1)}.
\end{split}
\end{equation}
Observe that $p_n+r_n+q_n=1, n\in\mathbb{Z}$. If we denote by $\tilde p_n,\tilde r_n,\tilde q_n$ the coefficients of the three-term recurrence relation for the classical Jacobi polynomials on $[0,1]$ such that the corresponding Jacobi matrix is a stochastic matrix (in \cite{GdI3} they are denoted by $a_n,b_n,c_n$) then we have $p_n=\tilde p_{n+t}, r_n=\tilde r_{n+t}, q_n=\tilde q_{n+t}, n\in\mathbb{Z}$. The corresponding doubly infinite Jacobi matrix can be written in the following form
\begin{equation}\label{QZ}
P=
\left(
\begin{array}{cccc|cccc}
\ddots&\ddots&\ddots&&&\\
&q_{-2}&r_{-2}&p_{-2}&&&\\
&&q_{-1}&r_{-1}&p_{-1}&&&\\
\hline
&&&q_0&r_0&p_0&&\\
&&&&q_1&r_1&p_1&\\
&&&&&\ddots&\ddots&\ddots
\end{array}
\right).
\end{equation}
Let us define the so-called \emph{potential coefficients} as
\begin{equation*}
\pi_0=1,\quad \pi_n=\frac{p_0p_1\cdots p_{n-1}}{q_1q_2\cdots q_n},\quad \pi_{-n}=\frac{q_0q_{-1}\cdots q_{-n+1}}{p_{-1}p_{-2}\cdots p_{-n}},\quad n\geq1.
\end{equation*}
It is possible to see from \eqref{coefTTRR} that $\pi=(\pi_n)_{n\in\ZZ}$ is explicitly given by
\begin{equation}\label{potcoeff}
\pi_n=\frac{(\alpha+\beta+t+1)_n(-t)_{-n}(2n+2t+\alpha+\beta+1)}{(\alpha+t+1)_n(-\beta-t)_{-n}(2t+\alpha+\beta+1)},\quad n\in\mathbb{Z},
\end{equation}
where $(a)_0=1, (a)_n=a(a+1)\cdots(a+n-1),n\geq1,$ denotes the Pochhammer symbol and $(a)_{-n}=1/(a-n)_n,n\geq1$. If we write $\Pi=\mbox{diag}\left(\cdots,\sqrt{\pi_{-1}},\sqrt{\pi_{0}},\sqrt{\pi_1},\cdots\right)$ the relation between the Jacobi matrices $J$ and $P$ given by \eqref{JZ} and \eqref{QZ}, respectively, is $J\Pi=\Pi P$. In the following result we will see under what conditions we can ensure that $p_n,r_n,q_n>0$ for all $n\in\mathbb{Z}$. Recall from Section \ref{sec2} that $\alpha$ and $\beta$ are inside the square $-1<\alpha,\beta<1$.
\begin{proposition}\label{prop1}
If $0\leq\beta<1$ then it is not possible to find $t$ such that $p_n,r_n,q_n>0$ for all $n\in\mathbb{Z}$. Therefore assume that $-1<\alpha<1$ and $-1<\beta<0$. Then the coefficients $p_n,r_n,q_n$ defined by \eqref{coefTTRR} are all positive if we choose $t$ according to one of the following 8 regions (see Figure \ref{fig1}):
\begin{figure}[h]
\includegraphics[scale = 0.3]{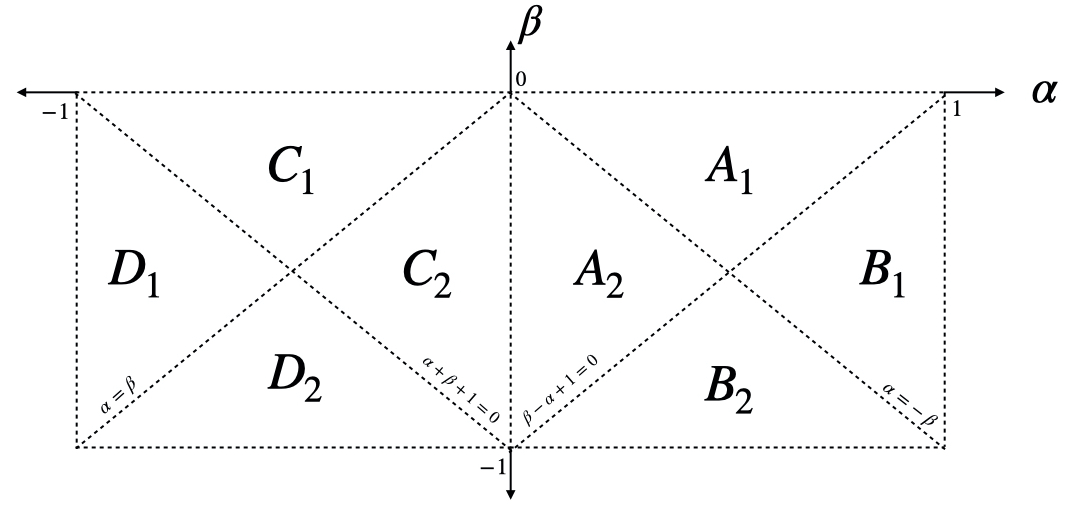}
\caption{Representation of the regions in Proposition \ref{prop1}.}
\label{fig1}
\end{figure}
\begin{description}
\item[$\bm A_1$] $\{\beta-\alpha+1>0,\alpha>-\beta,\beta<0\}$. Then $t$ must be chosen in the following real set:
\begin{equation}\label{setA1}
t\in\bigcup_{n\in\mathbb{Z}}(n,n-\beta)\cup(n-\alpha,n-\alpha-\beta).
\end{equation}
\item[$\bm A_2$] $\{\beta-\alpha+1>0,\alpha<-\beta,\alpha>0\}$. Then $t$ must be chosen in the following real set:
$$
t\in\bigcup_{n\in\mathbb{Z}}(n-\alpha,n)\cup(n-\alpha-\beta,n-\beta).
$$
\item[$\bm B_1$] $\{\beta-\alpha+1<0,\alpha>-\beta,\alpha<1\}$. Then $t$ must be chosen in the following real set:
$$
t\in\bigcup_{n\in\mathbb{Z}}(n,n-\alpha+1)\cup(n-\beta,n-\alpha-\beta+1).
$$
\item[$\bm B_2$] $\{\beta-\alpha+1<0,\alpha<-\beta,\beta>-1\}$. Then $t$ must be chosen in the following real set:
$$
t\in\bigcup_{n\in\mathbb{Z}}(n-\beta-1,n)\cup(n-\alpha-\beta,n-\alpha+1).
$$
\item[$\bm C_1$] $\{\beta+\alpha+1>0,\alpha>\beta,\alpha<0\}$. Then $t$ must be chosen in the following real set:
$$
t\in\bigcup_{n\in\mathbb{Z}}(n,n-\alpha)\cup(n-\beta,n-\alpha-\beta).
$$
\item[$\bm C_2$] $\{\beta+\alpha+1>0,\alpha<\beta,\beta<0\}$. Then $t$ must be chosen in the following real set:
$$
t\in\bigcup_{n\in\mathbb{Z}}(n,n-\beta)\cup(n-\alpha,n-\alpha-\beta).
$$
\item[$\bm D_1$] $\{\beta+\alpha+1<0,\alpha>\beta,\beta>-1\}$. Then $t$ must be chosen in the following real set:
$$
t\in\bigcup_{n\in\mathbb{Z}}(n-\beta-1,n)\cup(n-\alpha-\beta,n-\alpha+1).
$$
\item[$\bm D_2$] $\{\beta+\alpha+1<0,\alpha<\beta,\alpha>-1\}$. Then $t$ must be chosen in the following real set:
$$
t\in\bigcup_{n\in\mathbb{Z}}(n-\alpha-1,n)\cup(n-\alpha-\beta,n-\beta+1).
$$
\end{description}
\end{proposition}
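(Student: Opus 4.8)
The starting point, already implicit in Remark~\ref{rem1}, is that the three coefficients in \eqref{coefTTRR} depend on $n$ and $t$ only through the single real variable $s=n+t$. Accordingly I set
\[
p(s)=\frac{(s+\beta+1)(s+\alpha+\beta+1)}{(2s+\alpha+\beta+1)(2s+\alpha+\beta+2)},\qquad q(s)=\frac{s(s+\alpha)}{(2s+\alpha+\beta)(2s+\alpha+\beta+1)},
\]
and $r(s)=1-p(s)-q(s)$, so that the requirement ``$p_n,r_n,q_n>0$ for all $n\in\mathbb{Z}$'' is exactly the condition $t+\mathbb{Z}\subseteq G$, where $G=\{s\in\mathbb{R}:p(s)>0,\ q(s)>0,\ r(s)>0\}$ with the poles of $p,q$ excluded. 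Since $p(s),q(s)\to\tfrac14$ and $r(s)\to\tfrac12$ as $s\to\pm\infty$, the complement $B=\mathbb{R}\setminus G$ is bounded, and $t+\mathbb{Z}\subseteq G$ is equivalent to $t\notin B+\mathbb{Z}$. Because $B$ is bounded, $B+\mathbb{Z}$ is $1$-periodic, so the admissible set of $t$ is $1$-periodic as well; this explains the shape $\bigcup_{n\in\mathbb{Z}}(\cdots)$ of the answer and reduces the whole problem to locating $B$ modulo $1$, i.e. to intersecting the admissible residue sets coming from $p$, $q$ and $r$ separately.

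The next step is the sign analysis of $p$ and $q$. Each is a ratio of two quadratics with explicit roots: $q$ vanishes at $s=0,-\alpha$ with poles at $s=-\tfrac{\alpha+\beta}{2},-\tfrac{\alpha+\beta+1}{2}$, while $p$ vanishes at $s=-\beta-1,-\alpha-\beta-1$ with poles at $s=-\tfrac{\alpha+\beta+1}{2},-\tfrac{\alpha+\beta+2}{2}$. Tracking signs between consecutive critical points, starting from the value $+\tfrac14$ at $+\infty$, shows that $\{q\le0\}$ and $\{p\le0\}$ are each a union of two bounded intervals; projecting these modulo $1$ gives the forbidden residues, and the complement gives the residues of $t$ for which $q$, respectively $p$, is positive along the whole coset. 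The only datum that varies from case to case is the cyclic order on $\mathbb{R}/\mathbb{Z}$ of the roots $0,-\alpha,-\beta,-\alpha-\beta$ together with the interlacing of the poles, and this order is governed precisely by the signs of $\alpha+\beta$, $\alpha-\beta$, $\alpha+\beta+1$ and $\beta-\alpha+1$ --- which are exactly the inequalities defining the eight regions $\bm A_1,\dots,\bm D_2$ of Figure~\ref{fig1}.

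I would then fix a region, list its critical points in cyclic order, and intersect the admissible residue sets for $p$ and for $q$; in region $\bm A_1$, for instance, this intersection leaves exactly $(0,-\beta)\cup(-\alpha,-\alpha-\beta)$, that is \eqref{setA1}, and the remaining seven regions are treated identically, the labour being roughly halved by the symmetry $\alpha\leftrightarrow\beta$ combined with the state reflection $n\mapsto-n$ that interchanges $p$ and $q$. The point that must be verified, and which I expect to be the main technical obstacle, is that the third inequality $r(s)>0$ imposes \emph{no} further restriction on this intersection. The key structural fact here is that the poles of $p$ and of $q$ coincide modulo $1$ (indeed $-\tfrac{\alpha+\beta+2}{2}=-\tfrac{\alpha+\beta}{2}-1$), so that on the side of a pole where one of $p,q$ blows up to $+\infty$ --- exactly where $r=1-p-q\to-\infty$ --- the corresponding coset already carries a point at which the other of $p,q$ is negative. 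Writing $r$ as the sum of the two rational functions appearing in its definition in \eqref{coefTTRR} and checking that both summands are positive on the common positivity set of $p$ and $q$ away from the poles reconciles the three inequalities and pins down $G$ exactly.

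Finally, the impossibility when $0\le\beta<1$ falls out of the same analysis: a direct inspection of the cyclic order of the roots shows that for $\beta\ge0$ the admissible residue set for $q$ and the admissible residue set for $p$ become \emph{disjoint} on $\mathbb{R}/\mathbb{Z}$, so their intersection is empty. Hence no choice of $t$ can keep $p$ and $q$ simultaneously positive along an entire coset $t+\mathbb{Z}$, and stochasticity fails for every $t$; this forces $-1<\beta<0$ and leaves the eight regions as the only possibilities.
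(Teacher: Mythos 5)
Your proposal is correct and, at bottom, runs through the same computations as the paper: both reduce everything to the single variable $s=n+t$, locate the zeros and poles of the factors in \eqref{xysrprop} (your roots $0,-\alpha,-\beta-1,-\alpha-\beta-1$ and poles $-\tfrac{\alpha+\beta}{2},-\tfrac{\alpha+\beta+1}{2},-\tfrac{\alpha+\beta+2}{2}$ are exactly the critical points of the paper's Figures \ref{figone} and \ref{figtwo}), and both control $r$ through the same two-summand decomposition $r_n=x_nt_{n+1}+y_ns_n$. Where you genuinely differ is in the reduction modulo $1$: you correctly phrase the condition as $t\notin B+\ZZ$ with $B$ the full bounded bad set, and then project \emph{all} of $B$ to $\RR/\ZZ$, whereas the paper restricts attention to the single window $s\in[-1,0]$. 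This difference is not cosmetic. In the paper's cases (3) of the first part and (5) of the second part, the argument only shows that the summand $y_ns_n$ of $r_n$ is negative, which does not force $r_n<0$; and indeed one can check (e.g.\ $\alpha=0.5$, $\beta=-0.2$, $s=-0.29\in(-\alpha-\beta,-\tfrac{\alpha+\beta}{2})$) that $p,q,r$ are all simultaneously positive at some points of those subintervals. Those residues are nevertheless forbidden, but for the reason your formulation captures automatically: the interval $(-\alpha-\beta-1,-\tfrac{\alpha+\beta+2}{2})\subset(-2,-1)$ on which $p<0$ projects mod $1$ onto exactly $(-\alpha-\beta,-\tfrac{\alpha+\beta}{2})$, so the coset fails at $n-1$ rather than at $n$. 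Your approach thus closes a real gap in the windowed sign-checking, and the same mechanism (good residue sets of $p$ and of $q$ meeting only at the poles) is what makes your impossibility argument for $0\le\beta<1$ work where the paper's case (3) is inconclusive.

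One sentence of your write-up should be repaired before you carry out the details: it is \emph{not} true that both summands of $r$ are positive on the pointwise common positivity set $\{p>0\}\cap\{q>0\}$ (the example above, or $s=-0.2$ with the same parameters where $p,q>0$ but $r<0$, refutes it). What is true, and what your argument actually needs, is the coset-wise statement: on every integer translate of the two intervals that survive the $p$- and $q$-analysis --- for $\bm A_1$, on each $(m,m-\beta)$ and each $(m-\alpha,m-\alpha-\beta)$, $m\in\ZZ$ --- all four factors $x_n,y_n,s_n,t_n$ have coherent signs, so both summands of $r$ are positive there and $r>0$ imposes no further restriction on $B+\ZZ$. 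With that correction the proof goes through and yields \eqref{setA1} and its analogues in the remaining seven regions.
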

\begin{proof}
Let us write the coefficients $p_n,r_n,q_n$ in \eqref{coefTTRR} in the following more convenient way
\begin{equation}\label{prqprop}
p_n=x_ns_{n+1},\quad r_n=x_nt_{n+1}+y_ns_n,\quad q_n=t_ny_n,\quad n\in\ZZ,
\end{equation}
where for $n\in\ZZ$
\begin{equation}\label{xysrprop}
x_n=\frac{n+t+\beta+1}{2n+2t+\alpha+\beta+1},\; y_n=\frac{n+t+\alpha}{2n+2t+\alpha+\beta+1},\; s_n=\frac{n+t+\alpha+\beta}{2n+2t+\alpha+\beta},\;  t_n=\frac{n+t}{2n+2t+\alpha+\beta}.
\end{equation}
This representation is motivated by the UL factorization of the doubly infinite matrix $P$ \eqref{QZ}. For reasons that we will explain in the next Section \ref{sec41}, we want the sequences in \eqref{xysrprop} to be probabilities for all $n\in\ZZ$, since we are interested in an urn model associated with the transition probability matrix $P$ \eqref{QZ} (see Section \ref{sec6}).

Let us now prove the first part of the proposition. For simplicity we restrict ourselves to the region $R=\{\alpha>\beta\geq0,\beta<1-\alpha\}$. The rest of cases are similar. Under the conditions on the region $R$ we have that the values of the zeros of each of the factors in \eqref{xysrprop} (for $n+t$ as a variable) are located as in Figure \ref{figone} (not to scale). 
\begin{figure}[h]
\includegraphics[scale = 0.35]{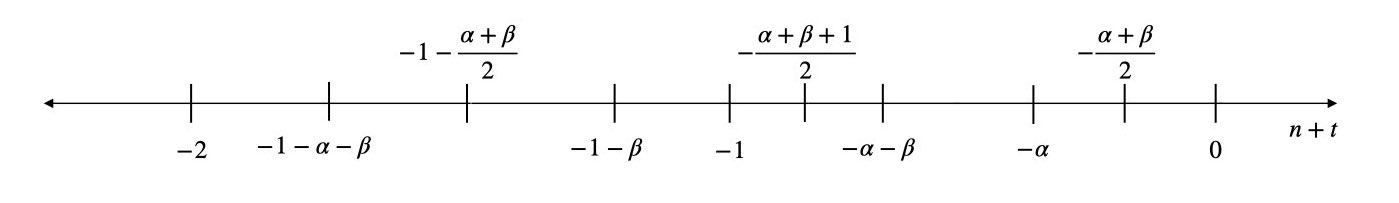}
\caption{Position of different values of the zeros for which $n+t$ vanishes in each of the factors of \eqref{coefTTRR} for the first part of the Proposition \ref{prop1}.}
\label{figone}
\end{figure}
It is enough to see what happens in the interval $[-1,0]$, since $n+t$ can be moved to any other interval of this size for some integer $n$. In the following analysis we will be focusing on the sign of $x_n,y_n,s_n,t_n$ in \eqref{xysrprop} and consequently the sign of $p_n,r_n,q_n$ in \eqref{prqprop}. We have 4 possibilities according to the position of $n+t$ (see Figure \ref{figone}):
\begin{enumerate}
\item If $-1<n+t<-\frac{\alpha+\beta+1}{2}$, then we have $x_n=\frac{(+)}{(-)}$ and $s_{n+1}=\frac{(+)}{(+)}$. Therefore $p_n=x_ns_{n+1}<0$.
\item If $-\frac{\alpha+\beta+1}{2}<n+t<-\alpha$, then we have $t_n=\frac{(-)}{(-)}$ and $y_{n}=\frac{(-)}{(+)}$. Therefore $q_n=t_ny_{n}<0$.
\item If $-\alpha<n+t<-\frac{\alpha+\beta}{2}$, then we have $y_n=\frac{(+)}{(+)}$ and $s_{n}=\frac{(+)}{(-)}$. Therefore $y_ns_{n}<0$ and the second summand in $r_n$ of \eqref{prqprop} is negative.
\item If $-\frac{\alpha+\beta}{2}<n+t<0$, then we have $t_n=\frac{(-)}{(+)}$ and $y_{n}=\frac{(+)}{(+)}$. Therefore $q_n=t_ny_{n}<0$.
\end{enumerate}
In all the 4 cases below it is not possible to find $t$ such that $p_n,r_n,q_n>0$ for all $n\in\ZZ$. Finally, if $n+t>0$ then all $x_n,y_n,s_n,t_n$ in \eqref{xysrprop} are positive and $p_n,r_n,q_n>0$ for all $n\in\ZZ$. But then we have to choose $t$ such that $t>-n$. As $|n|\to\infty$ it will not be possible to find a finite $t$ such that $p_n,r_n,q_n>0$ for all $n\in\ZZ$. The same can be applied if $n+t<-1-\alpha-\beta$, in which case all $x_n,y_n,s_n,t_n$ in \eqref{xysrprop} are negative and $p_n,r_n,q_n>0$ for all $n\in\ZZ$. The rectangle $-1<\alpha<1, 0\leq\beta<1$ can be divided in a similar way as in Figure \ref{fig1}. The region $R$ is just one of these triangles. The proof for the rest of the regions is similar, only changing the position of the values of the zeros in Figure \ref{figone}.

For the second part of the proposition we will focus on the region $\bm A_1$. The rest of cases are similar. As before, we have that the values of the zeros of each of the factors in \eqref{xysrprop} (for $n+t$ as a variable) are located as in Figure \ref{figtwo} (not to scale),
\begin{figure}[h]
\includegraphics[scale = 0.35]{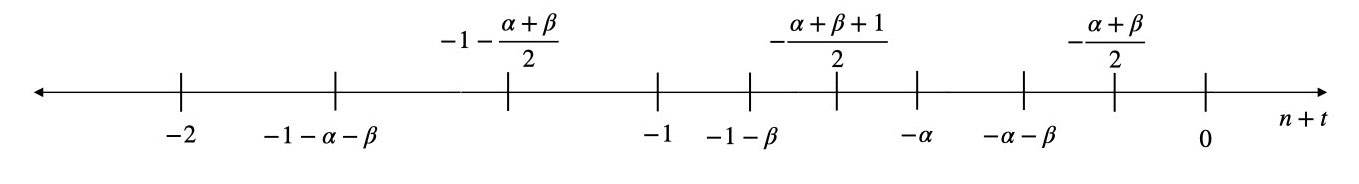}
\caption{Position of different values of the zeros for which $n+t$ vanishes in each of the factors of \eqref{coefTTRR} for the second part of the Proposition \ref{prop1}.}
\label{figtwo}
\end{figure}
so it is enough to see what happens in the interval $[-1,0]$. We have now 6 possibilities according to the position of $n+t$:
\begin{enumerate}
\item If $-1<n+t<-1-\beta$, then we have $x_n=\frac{(-)}{(-)}$, $s_{n+1}=\frac{(+)}{(+)}$, $t_{n+1}=\frac{(+)}{(+)}$, $y_n=\frac{(-)}{(-)}$, $s_{n}=\frac{(-)}{(-)}$ and $t_n=\frac{(-)}{(-)}$. Therefore $p_n,r_n,q_n>0$ for all $n\in\ZZ$.
\item If $-1-\beta<n+t<-\frac{\alpha+\beta+1}{2}$, then we have $x_n=\frac{(+)}{(-)}$ and $s_{n+1}=\frac{(+)}{(+)}$. Therefore $p_n=x_ns_{n+1}<0$.
\item If $-\frac{\alpha+\beta+1}{2}<n+t<-\alpha$, then we have $t_n=\frac{(-)}{(-)}$ and $y_{n}=\frac{(-)}{(+)}$. Therefore $q_n=t_ny_{n}<0$.
\item If $-\alpha<n+t<-\alpha-\beta$, then we have $x_n=\frac{(+)}{(+)}$, $s_{n+1}=\frac{(+)}{(+)}$, $t_{n+1}=\frac{(+)}{(+)}$, $y_n=\frac{(+)}{(+)}$, $s_{n}=\frac{(-)}{(-)}$ and $t_n=\frac{(-)}{(-)}$. Therefore $p_n,r_n,q_n>0$ for all $n\in\ZZ$.
\item If $-\alpha-\beta<n+t<-\frac{\alpha+\beta}{2}$, then we have $y_n=\frac{(-)}{(-)}$ and $s_{n}=\frac{(+)}{(-)}$. Therefore $y_ns_{n}<0$ and the second summand in $r_n$ of \eqref{prqprop} is negative.
\item If $-\frac{\alpha+\beta}{2}<n+t<0$, then we have $t_n=\frac{(-)}{(+)}$ and $y_{n}=\frac{(+)}{(+)}$. Therefore $q_n=t_ny_{n}<0$.
\end{enumerate}
The previous cases (1) and (4) correspond to what we wanted to prove in \eqref{setA1}. The proof for the rest of the regions is similar, only changing the position of the values in Figure \ref{figtwo}.
\end{proof}

Under the conditions of the previous proposition we can ensure that the doubly infinite Jacobi matrix $P$ in \eqref{QZ} is \emph{stochastic}. Therefore it can interpreted as the one-step transition probability matrix of a nontrivial bilateral birth-death chain $\{Z_t : t=0,1,\ldots\}$ on $\mathbb{Z}$ with diagram given by
\begin{center}
\includegraphics[scale=.30]{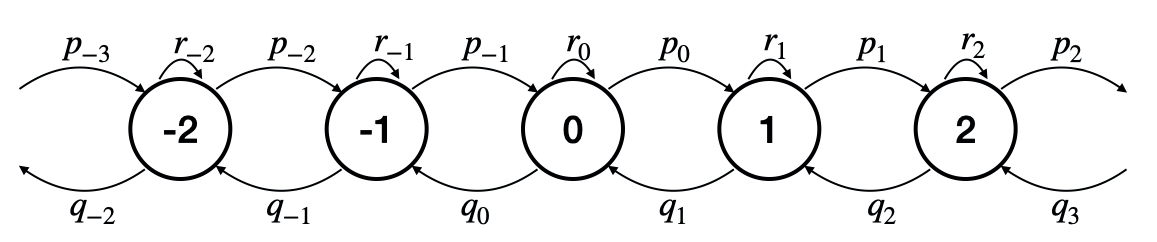}
\end{center}
depending on three parameters $\alpha,\beta,t$. As in Section \ref{sec2}, we can define two sets of linearly independent polynomials $(Q_n^{\eta}(x))_{n\in\ZZ}, \eta=1,2,$ by the three-term recurrence relation with initial conditions
\begin{align*}
\nonumber Q_0^1(x)&=1,\quad Q_{0}^2(x)=0,\\
\label{TTRRZ}Q_{-1}^1(x)&=0,\quad Q_{-1}^2(x)=1,\\
\nonumber xQ_n^{\eta}(x)&=p_nQ_{n+1}^{\eta}(x)+r_nQ_n^{\eta}(x)+q_nQ_{n-1}^{\eta}(x),\quad n\in\ZZ,\quad\eta=1,2.
\end{align*}
A closed form of some of these polynomials can be computed using \cite[Theorem 1]{Wi} in terms of $_4F_3$ hypergeometric functions (see also \cite[Section 4.4]{AIB}). These polynomials are orthogonal in the following sense
\begin{equation*}\label{ortoZ2}
\int_0^1\begin{pmatrix} Q_i^1(x),&\hspace{-0.3cm}Q_i^2(x)\end{pmatrix}\bm \Psi(x)\begin{pmatrix} Q_j^1(x)\\Q_j^2(x)\end{pmatrix}dx=\frac{\delta_{ij}}{\pi_j},\quad i,j\in\ZZ,
\end{equation*}
where $(\pi_n)_{n\in\ZZ}$ are defined by \eqref{potcoeff} and (see \eqref{WW})
\begin{equation}\label{psiw}
\bm\Psi(x)=\begin{pmatrix} 1&0\\0&1/\sqrt{\pi_{-1}}\end{pmatrix}\bm W(x)\begin{pmatrix} 1&0\\0&1/\sqrt{\pi_{-1}}\end{pmatrix}=x^\alpha(1-x)^\beta
\begin{pmatrix} \Sigma_{11}(x)&\D\frac{1}{\sqrt{\pi_{-1}}}\Sigma_{12}(x)\\\D\frac{1}{\sqrt{\pi_{-1}}}\Sigma_{12}(x)&\D\frac{1}{\pi_{-1}}\Sigma_{22}(x)\end{pmatrix},\quad x\in[0,1].
\end{equation}
With this information we can compute the $n$-step transition probabilities of the bilateral birth-death chain $\{Z_t : t=0,1,\ldots\}$, given by the Karlin-McGregor integral representation formula (see \cite{KMc6})
\begin{equation*}\label{KmcG1}
P_{ij}^{(n)}\doteq\mathbb{P}(Z_n=j \; | Z_0=i)=\pi_j\int_{0}^{1}x^n\begin{pmatrix} Q_i^1(x),&\hspace{-0.3cm}Q_i^2(x)\end{pmatrix}\bm\Psi(x)\begin{pmatrix}Q_j^1(x)\\Q_j^2(x)\end{pmatrix}dx,\quad i,j\in\ZZ.
\end{equation*}
Again, after the relabeling \eqref{relab}, all the information of $P$ can be collected in a semi-infinite block tridiagonal matrix $\bm P$ with blocks of size $ 2 \times2 $, given by
\begin{equation}\label{Pmblock}
\bm P=\begin{pmatrix}
E_0&D_0&&&\\
F_1&E_1&D_1&&\\
&F_2&E_2&D_2&\\
&&\ddots&\ddots&\ddots
\end{pmatrix},
\end{equation}
where
\begin{align*}
E_0&=\begin{pmatrix} r_0 & q_0\\p_{-1} & r_{-1}\end{pmatrix},\quad E_n=\begin{pmatrix} r_n & 0\\ 0 & r_{-n-1}\end{pmatrix},\quad n\geq1,\\
D_n&=\begin{pmatrix} p_n & 0\\ 0 & q_{-n-1}\end{pmatrix},\quad n\geq0,\quad F_n=\begin{pmatrix} q_n & 0\\ 0 & p_{-n-1}\end{pmatrix},\quad n\geq1.
\end{align*}
The birth-death chain generated by $\bm P $ can be interpreted as a process that takes values in the two-dimensional state space $\ZZ_{\geq0}\times \{1,2 \}$, where the first component is usually called the \emph{level} while the second component is called the \emph{phase}. These type of processes are also known as discrete-time \emph {quasi-birth-and-death processes}. In general these processes allow transitions between all adjacent states levels and all phases (see \cite{LaR, Neu} for general references). As before, if we define the matrix-valued polynomials
\begin{equation}\label{2QMM}
\bm Q_n(x)=\begin{pmatrix} Q_n^1(x) & Q_n^2(x) \\ Q_{-n-1}^1(x) & Q_{-n-1}^2(x)\end{pmatrix},\quad n\geq0,
\end{equation}
then we have
\begin{equation}\label{TTRRQ}
\begin{split}
x\bm Q_0(x)&=D_0\bm Q_{1}(x)+E_0\bm Q_0(x),\quad \bm Q_0(x)=I_{2\times2},\\
x\bm Q_n(x)&=D_n\bm Q_{n+1}(x)+E_n\bm Q_n(x)+F_n\bm Q_{n-1}(x),\quad n\geq1,
\end{split} 
\end{equation}
and
\begin{equation*}\label{ortoZ3}
\int_{0}^{1}\bm Q_n(x)\bm\Psi(x)\bm Q_m^*(x)dx=\begin{pmatrix} 1/\pi_n & 0 \\ 0 & 1/\pi_{-n-1}\end{pmatrix}\delta_{nm}.
\end{equation*}
The relation between the matrix-valued polynomials $\bm Q_n(x)$ in \eqref{2QMM} and $\bm P_n(x)$ in \eqref{2QMMP} is given by
$$
\bm P_n(x)=\begin{pmatrix} \sqrt{\pi_n} & 0 \\ 0 & \sqrt{\pi_{-n-1}} \end{pmatrix}\bm Q_n(x)\begin{pmatrix} 1 & 0 \\ 0 & 1/\sqrt{\pi_{-1}} \end{pmatrix},\quad n\geq0.
$$
In this case we have (see \cite{DRSZ, G2}) the Karlin-McGregor integral representation formula where the $2\times2$ block entry $(i,j)$ is given by
\begin{equation}\label{KMcG1}
\bm P_{ij}^{(n)}=\left(\int_{0}^{1}x^n\bm Q_i(x)\bm\Psi(x)\bm Q_j^*(x)dx\right)\begin{pmatrix} \pi_j & 0 \\ 0 & \pi_{-j-1}\end{pmatrix},\quad i,j\in\ZZ_{\geq0}.
\end{equation}

Finally, let us make some remarks about the recurrence and the invariant measure of this family of discrete-time bilateral birth-death chains. From Corollaries 4.1 and 4.2 of \cite{DRSZ} (see also \cite[Remark 3.1]{dIJ1}), it is possible to derive that a birth-death chain on $\mathbb{Z}$ is \emph{recurrent} if and only if at least one entry of the integral $\int_{0}^1\bm\Psi(x)/(1-x)dx$ is divergent, and it is positive recurrent if and only if one of the entries of $\bm\Psi(x)$ has a jump at the point 1. Taking a look at the spectral matrix $\bm\Psi$ (see \eqref{psiw} and \eqref{WW}) it is possible to see that, under the conditions of  Proposition \ref{prop1}, the functions inside the matrix are bounded at the point $x=1$. Therefore the divergence of the integral $\int_{0}^1\bm\Psi(x)/(1-x)dx$ only depends on the part $(1-x)^\beta$ from the scalar measure. Under the conditions of Proposition \ref{prop1}, i.e. $-1<\beta<0$, we always have that all entries of $\int_{0}^1\bm\Psi(x)/(1-x)dx$ are divergent. Therefore the bilateral birth-death chain is always recurrent. Since the spectral measure has only an absolutely continuous part, there are no jumps at the point 1. Therefore the birth-death chain is always \emph{null recurrent}. As for the invariant measure, this is directly given by the vector $\bm\pi=(\cdots, \pi_{-1},\pi_{0},\pi_{1},\cdots)$ where $(\pi_n)_{n\in\ZZ}$ is given by \eqref{potcoeff}.

\begin{remark}\label{rem32}
Instead of \eqref{coefTTRR}, which lead us to a discrete-time bilateral birth-death chain, we could have considered the polynomials $(Q_n^{\eta}(x))_{n\in\ZZ}, \eta=1,2,$ generated by the three-term recurrence coefficients $p_n, q_n$ and $r_n=-p_n-q_n, n\in\mathbb{Z}$. Then we will get a continuous-time bilateral birth-death process and similar spectral analysis can be performed (see \cite{dI3}). This was done in \cite{IM} for the case where the state space is in $\mathbb{Z}_{\geq0}$. In this paper we focus on the discrete-time bilateral birth-death chain in order to give a probabilistic interpretation in terms of urn models (see Section \ref{sec6}).
\end{remark}

\begin{remark}
Let us return to the subject of \emph{bispectrality} pointed out in Remark \ref{rembis}. The matrix-valued orthogonal polynomials $(\bm Q_n)_{n\geq0}$ \eqref{2QMM} are \emph{bispectral}. Not only they are eigenfuntions of a block tridiagonal Jacobi operator $\bm P$ like in \eqref{Pmblock} (see also \eqref{TTRRQ}), but  they are also eigenfunctions of the following matrix-valued second-order differential operator
\begin{equation}\label{sode}
\bm B=x(1-x)\frac{d^2}{dx^2}+(C-xU)\frac{d}{dx},
\end{equation}
i.e. $\bm Q_n(x)\bm B=\Lambda_n\bm Q_n(x)$, where
\begin{equation}\label{CUVL}
\begin{split}
C&=\begin{pmatrix}
\alpha+1+\D\frac{2t(\beta+t)}{\alpha+\beta+2t} & 2t-\D\frac{2t(\beta+t)}{\alpha+\beta+2t}\\
-2(\beta+t)+\D\frac{2t(\alpha+t)}{\alpha+\beta+2t} & 1-\alpha-\D\frac{2t(\beta+t)}{\alpha+\beta+2t}
\end{pmatrix},\quad U=\begin{pmatrix}
\alpha+\beta+2t+2& 0\\
0& -\alpha-\beta-2t+2
\end{pmatrix},\\
V&=\begin{pmatrix}
-\alpha-\beta-2t & 0\\
0&0
\end{pmatrix},\quad \Lambda_n=\begin{pmatrix}
-(n+1)(n+\alpha+\beta+2t)& 0\\
0& -n(n-\alpha-\beta-2t+1)
\end{pmatrix},\quad n\geq0.
\end{split}
\end{equation}
Observe that the coefficients of  $\bm B$ (independent of $n$) are multiplied on the right while the eigenvalue $\Lambda_n$ is multiplied on the left. This is consistent with the theory of matrix-valued orthogonal polynomials satisfying second-order differential equations initiated by A.J. Dur\'an, F.A. Gr\"unbaum, I. Pacharoni and J.A. Tirao (see \cite{DG1,DG2,GPT1,GPT2}). In terms of the two linearly independent families of polynomials $(Q_n^{\eta}(x))_{n\in\ZZ}, \eta=1,2,$ we have two coupled second-order differential equations of the form
\begin{align*}
x(1-&x)\left(Q_n^{\eta}(x)\right)''+\left(1+\varepsilon\left(\alpha+\frac{2t(\beta+t)}{\alpha+\beta+2t}\right)-x(2+\varepsilon(\alpha+\beta+2t))\right)\left(Q_n^{\eta}(x)\right)'\\
&+\left(-\beta(1+\varepsilon)+2\varepsilon t\left(-1+\frac{\beta+t}{\alpha+\beta+2t}\right)\right)\left(Q_n^{\eta+\varepsilon}(x)\right)'-\frac{1}{2}(1+\varepsilon)(\alpha+\beta+2t)Q_n^{\eta}(x)=\lambda_n^{\pm}Q_n^{\eta}(x),\quad n\in\mathbb{Z},
\end{align*}
where
$$
\varepsilon=\begin{cases}1,&\mbox{if} \quad\eta=1\\-1,&\mbox{if}\quad\eta=2\end{cases},\quad \lambda_n^{\pm}=\begin{cases}-(n+1)(n+\alpha+\beta+2t),&\mbox{if} \quad n\geq0\\-n(n-\alpha-\beta-2t+1),&\mbox{if}\quad n<0\end{cases}.
$$
Finally, let us point out that J. Wimp \cite{Wi} found a fourth-order differential equation with coefficients \emph{depending on $n$} for the family of polynomials $(Q_n^1(x))_{n\geq0}$.

\end{remark}

\section{Stochastic factorizations of the associated Jacobi matrix}\label{sec4}

Our goal now is to find an application of the bilateral birth-death chain with transition probability matrix $P$ \eqref{QZ} in terms of urn models (see Section \ref{sec6} below). To that end, we will follow the ideas of \cite{GdI3} (see also \cite{dIJ1,dIJ2}) and divide the urn model associated with $P$ into two different and simpler urn experiments, and combine them to obtain a simpler description of the original urn model. For that we will study stochastic UL and LU decompositions of the transition probability matrix $P$ \eqref{QZ}. These factorizations typically come with a free parameter, but we will see that in the case of associated Jacobi polynomials, the factorizations, if possible, are unique.

\subsection{Stochastic UL factorization}\label{sec41}

Let us perform a UL factorization of $P$ \eqref{QZ} in the following way
\begin{equation}\label{QZUL}
P=\left(\begin{array}{cc|cccc}
\ddots&\ddots&&\\
0&y_{-1}&x_{-1}&&\\
\hline
&0&y_0&x_0&\\
&&0&y_1&x_1\\
&&&&\ddots&\ddots
\end{array}
\right)\left(\begin{array}{ccc|ccc}
\ddots&\ddots&&&\\
&t_{-1}&s_{-1}&0&\\
\hline
&&t_0&s_0&0\\
&&&t_1&s_1&0\\
&&&&\ddots&\ddots
\end{array}
\right)=P_UP_L,
\end{equation}
where $P_U$ and $P_L$ are also stochastic matrices. This means that all entries of $P_U$ and $P_L$ are nonnegative and 
\begin{equation*}\label{stpul}
x_n+y_n=1,\quad s_n+t_n=1,\quad n\in\ZZ.
\end{equation*}
A direct computation shows that
\begin{align}
\nonumber p_n&=x_ns_{n+1},\\
\label{ULd}r_n&=x_nt_{n+1}+y_ns_n,\quad n\in\ZZ,\\
\nonumber q_n&=y_nt_n.
\end{align}
This UL factorization typically depends on one free parameter $y_0$. If we want the factors $P_U$ and $P_L$ to be also stochastic matrices, then we have to apply \cite[Theorem 2.1]{dIJ1}, which says (under certain conditions) that the factorization is stochastic if and only if we choose $y_0$ in the following range
$$
H'\leq y_0\leq H,
$$
where $H$ and $H'$ are the following continued fractions generated by alternatively choosing $p_n$ and $q_n$ in different directions, i.e.
\begin{equation}\label{cfUL}
H=1-\cfrac{p_0}{1-\cfrac{q_1}{1-\cfrac{p_1}{1-\cfrac{q_2}{1-\cdots}}}},\quad H'=\cfrac{q_0}{1-\cfrac{p_{-1}}{1-\cfrac{q_{-1}}{1-\cfrac{p_{-2}}{1-\cdots}}}}.
\end{equation}

\begin{theorem}\label{thm1}
Assume that $\alpha>0$. Then we have that
\begin{equation}\label{HHp}
H=H'=\frac{\alpha+t}{\alpha+\beta+2t+1}.
\end{equation}
Therefore there exists only one value of the parameter $y_0$ ($y_0=H$) such that we obtain a stochastic UL factorization of the form \eqref{QZUL} and the coefficients of each of the factors $P_U$ and $P_L$ are given by
\begin{equation}\label{xysr1}
\begin{split}
y_n&=\frac{n+t+\alpha}{2n+2t+\alpha+\beta+1},\quad x_n=\frac{n+t+\beta+1}{2n+2t+\alpha+\beta+1},\\
s_n&=\frac{n+t+\alpha+\beta}{2n+2t+\alpha+\beta},\qquad\;\;\; t_n=\frac{n+t}{2n+2t+\alpha+\beta},
\end{split}\qquad n\in\ZZ.
\end{equation}
\end{theorem}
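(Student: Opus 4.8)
The plan is to produce the factorization explicitly and then show that the two continued fractions in \eqref{cfUL} both collapse to the single value $y_0=(\alpha+t)/(\alpha+\beta+2t+1)$. First I would observe that the four sequences in \eqref{xysr1} are exactly those already introduced in \eqref{xysrprop}, and that the computation \eqref{prqprop} carried out in the proof of Proposition \ref{prop1} shows they satisfy the UL relations \eqref{ULd} together with the normalizations $x_n+y_n=1$ and $s_n+t_n=1$. Under the hypotheses of Proposition \ref{prop1} all four lie in $[0,1]$, so \eqref{xysr1} already defines \emph{a} stochastic UL factorization, and its free parameter is $y_0=(t+\alpha)/(2t+\alpha+\beta+1)$, precisely the claimed common value of $H$ and $H'$. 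It therefore remains only to prove that this is the \emph{unique} admissible $y_0$, i.e. that $H=H'=y_0$.

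Next I would unfold the continued fractions using \eqref{ULd}. Writing $p_n=(1-y_n)s_{n+1}$ and $q_n=y_n(1-s_n)$ gives the two elementary identities $s_{n+1}=p_n/(1-y_n)$ and $s_n=1-q_n/y_n$, equivalently the tail recursion $y_n=1-p_n/(1-q_{n+1}/y_{n+1})$. Substituting repeatedly shows that the $N$-th approximant of $H$, with its bottom entry set to the exact tail $y_N=(N+t+\alpha)/(2N+2t+\alpha+\beta+1)$, equals $y_0$ \emph{identically} in $N$; the symmetric backward unfolding, starting from $y_0=q_0/(1-s_0)$ with $s_0=p_{-1}/(1-y_{-1})$, yields the same identity for $H'$. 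Thus, at the formal level, both continued fractions telescope to $y_0$, and the only gap is to justify that replacing the exact tails $y_{\pm N}\to\tfrac12$ by the standard truncations does not change the limiting value.

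This convergence/branch-selection step is where I expect the main obstacle to lie, and where the hypothesis $\alpha>0$ is essential. Since $p_n,q_n\to\tfrac14$, the Möbius maps defining the approximants have the parabolic fixed point $\tfrac12$, so the continued fractions converge only conditionally and one must check that the approximants select the \emph{minimal} (recessive) solution of the tail recursion at $\pm\infty$; I expect that $\alpha>0$ (together with $-1<\beta<0$) is exactly what makes \eqref{xysr1} that recessive branch, whereas the dominant branch corresponds to the competing LU factorization treated in Theorem \ref{thm2}. Concretely, I would estimate the difference between the exact-tail and standard approximants through the determinant of the truncating Möbius map (a product of the $p_kq_k$) weighed against the growth of the potential coefficients $\pi_n$ in \eqref{potcoeff}, using the sign conditions to force decay; alternatively one can identify $H$ with a Gauss continued fraction for a ratio of contiguous ${}_2F_1$'s evaluated at $z=1$ and compute it by Gauss's summation theorem. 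Once $H=y_0$ and $H'=y_0$ are secured, the admissible interval $[H',H]$ furnished by \cite[Theorem 2.1]{dIJ1} degenerates to the single point $y_0$, the factorization is unique, and reading off $y_0,x_0,s_0,t_0$ and their shifts reproduces \eqref{xysr1}, completing the proof.
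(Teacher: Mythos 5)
Your overall skeleton is right, and you have correctly located the crux: the formal telescoping of \eqref{cfUL} via the tail identities $y_n=1-p_n/(1-q_{n+1}/y_{n+1})$ and $y_0=q_0/(1-s_0)$ only shows that the approximants \emph{with the exact tails inserted} are identically $y_0$. This is true for \emph{every} parameter sequence of the chain sequence $p_0,q_1,p_1,q_2,\dots$ (such sequences form a one-parameter family), whereas the continued fraction $H$ converges to the value attached to one distinguished (extremal) member of that family. So the entire content of \eqref{HHp} is the branch-selection step, and on that step your proposal offers only an expectation ("I expect that $\alpha>0$ \dots is exactly what makes \eqref{xysr1} that recessive branch") plus two unexecuted strategies. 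As written, this is a genuine gap: nothing in the argument rules out that the standard truncations converge to a different parameter value, in which case $H\neq H'$ and the uniqueness claim would fail.

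For comparison, the paper closes exactly this gap with the classical chain-sequence machinery: it exhibits an explicit parameter sequence $m_n$ (respectively $m_n'$) for the alternating sequence $p_0,q_1,p_1,\dots$ (respectively $q_0,p_{-1},q_{-1},\dots$) and invokes \cite[Theorem 3.1]{Ch}, which converts any parameter sequence into the value of the continued fraction through the series $L=\sum_{n\ge1}\prod_{k=1}^n m_k/(1-m_k)$. The products telescope into Pochhammer ratios, the series is summable in closed form, $L=(\beta+t+1)/\alpha$ and $L'=-(\alpha+t)/\alpha$, and convergence of these series is precisely where $\alpha>0$ enters; substituting into $H=m_0+(1-m_0)/(1+L)$ and $1-H'=m_0'+(1-m_0')/(1+L')$ gives \eqref{HHp}. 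Your "alternative" suggestion (recognizing $H$ as a Gauss continued fraction for a ratio of contiguous ${}_2F_1$'s at $z=1$ and applying Gauss summation) is essentially this same computation in different clothing and would also work if carried out; your first suggestion (determinant/minimal-solution estimates at the parabolic fixed point $1/2$) is harder to make quantitative and is not what the paper does. To repair your proof with minimal effort, replace the heuristic paragraph by the chain-sequence argument: verify that your tail sequences are parameter sequences, apply \cite[Theorem 3.1]{Ch}, and evaluate the two series explicitly. The remaining parts of your write-up (that \eqref{xysr1} satisfies \eqref{ULd} with $x_n+y_n=s_n+t_n=1$, and that $H=H'$ collapses the admissible interval of \cite[Theorem 2.1]{dIJ1} to a point) agree with the paper and are fine.
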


\begin{proof}
We will follow the same ideas as the proof of \cite[Proposition 5.1]{GdI3}. First, for $H$, we have that the sequence of alternating numbers $p_0,q_1,p_1,q_2, \ldots$ is a \emph{chain sequence}. Let us call $\alpha_n, n\geq1,$ the sequence of partial numerators. Then $\alpha_n=(1-m_{n-1})m_n$, where
$$
m_{2n}=\frac{n+t}{2n+2t+\alpha+\beta+1},\quad m_{2n+1}=\frac{n+t+\beta+1}{2n+2t+\alpha+\beta+2}.
$$
According to \cite[Theorem 3.1]{Ch} we have that
$$
H=m_0+\frac{1-m_0}{1+L},\quad L=\sum_{n=1}^\infty\prod_{k=1}^n\frac{m_k}{1-m_k}.
$$
It is possible to show that $L$ is convergent as long as $\alpha>0$, in which case we have $L=(\beta+t+1)/\alpha$. A direct computation gives the value of $H$ in \eqref{HHp}. On the other hand, for $H'$, we have again that the sequence $\alpha_n'=(1-m'_{n-1})m'_n, n\geq1$ of alternating numbers $q_0,p_{-1},q_{-1},p_{-2}, \ldots$ is a chain sequence where
$$
m'_{2n}=\frac{-n+t+\alpha+\beta+1}{-2n+2t+\alpha+\beta+1},\quad m'_{2n+1}=\frac{-n+t+\alpha}{-2n+2t+\alpha+\beta}.
$$
Therefore 
$$
1-H'=m'_0+\frac{1-m'_0}{1+L'},\quad L'=\sum_{n=1}^\infty\prod_{k=1}^n\frac{m'_k}{1-m'_k}.
$$
It is possible to show that $L'$ is convergent as long as $\alpha>0$, in which case we have $L'=-(\alpha+t)/\alpha$. A direct computation gives the value of $H'$ in \eqref{HHp}. Finally, a direct computation using \eqref{xysr1} and \eqref{ULd} gives \eqref{coefTTRR}.
\end{proof}

\subsection{Stochastic LU factorization}

Consider now the LU factorization of the stochastic matrix $P$ given by \eqref{QZ} in the following way
\begin{equation}\label{QZLU}
P=\left(\begin{array}{ccc|ccc}
\ddots&\ddots&&&\\
&\tilde t_{-1}&\tilde s_{-1}&0&\\
\hline
&&\tilde t_0&\tilde s_0&0\\
&&&\tilde t_1&\tilde s_1&0\\
&&&&\ddots&\ddots
\end{array}
\right)\left(\begin{array}{cc|cccc}
\ddots&\ddots&&\\
0&\tilde y_{-1}&\tilde x_{-1}&&\\
\hline
&0&\tilde y_0&\tilde x_0&\\
&&0&\tilde y_1&\tilde x_1\\
&&&&\ddots&\ddots
\end{array}
\right)=\widetilde P_L\widetilde P_U,
\end{equation}
where again $\widetilde P_L$ and $\widetilde P_U$ are also stochastic matrices, i.e. all entries of $\tilde P_L$ and $\tilde P_U$ are nonnegative and
\begin{equation*}\label{stplu}
\tilde t_n+ \tilde s_n=1, \quad \tilde y_n +\tilde x_n=1,\quad  n \in \ZZ.
\end{equation*}
Now, a direct computation shows that
\begin{align}
\nonumber p_n&=\tilde s_n \tilde x_n,\\
\label{LUd}r_n&=\tilde t_n\tilde x_{n-1}+\tilde s_n\tilde y_n,\quad n\in\ZZ,\\
\nonumber q_n&=\tilde t_n\tilde y_{n-1}.
\end{align}
Again, typically this factorization depends on a free parameter $\tilde r_0$ and applying \cite[Theorem 2.2]{dIJ1} we need that $H'\leq\tilde r_0\leq H$ in order to have a stochastic LU factorization, where $H$ and $H'$ are defined by \eqref{cfUL}.

\begin{theorem}\label{thm2}
Assume that $\alpha>0$. Then $H=H'$ is convergent to \eqref{HHp} and there exists only one value of the parameter $\tilde r_0$ ($\tilde r_0=H$) such that we obtain a stochastic LU factorization of the form \eqref{QZLU} and the coefficients of each of the factors $\widetilde{P}_L$ and $\widetilde{P}_U$ are given by
\begin{equation}\label{xysr2}
\tilde y_n=t_{n+1},\quad \tilde x_n=s_{n+1},\quad \tilde s_n=x_n,\quad \tilde t_n=y_n\quad n\in\ZZ,
\end{equation}
where $x_n,y_n,s_n,t_n$ are defined by \eqref{xysr1}.
\end{theorem}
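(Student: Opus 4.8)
The plan is to reduce the entire statement to Theorem \ref{thm1}, exploiting the fact that both factorization problems are governed by the \emph{same} pair of continued fractions. The crucial input is \cite[Theorem 2.2]{dIJ1}, which (as recorded just before the statement) constrains the free parameter of the LU factorization by $H'\leq\tilde r_0\leq H$, with $H$ and $H'$ the continued fractions of \eqref{cfUL} — precisely the objects analyzed in the proof of Theorem \ref{thm1}. I would therefore first note that the convergence argument carried out there (the chain-sequence computation yielding $L=(\beta+t+1)/\alpha$ and $L'=-(\alpha+t)/\alpha$ when $\alpha>0$) applies verbatim and gives $H=H'=(\alpha+t)/(\alpha+\beta+2t+1)$, i.e. \eqref{HHp}. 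Since the admissible interval $[H',H]$ then collapses to a single point, the only possible choice is $\tilde r_0=H$, which is exactly the claimed uniqueness.

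The second and only computational step is to exhibit the factors and confirm they reproduce $P$. I would take the candidate coefficients \eqref{xysr2}, namely $\tilde t_n=y_n$, $\tilde s_n=x_n$, $\tilde y_n=t_{n+1}$, $\tilde x_n=s_{n+1}$ with $x_n,y_n,s_n,t_n$ as in \eqref{xysr1}, and check two things. The row-sum (stochasticity) conditions are immediate: $\tilde t_n+\tilde s_n=y_n+x_n=1$ and $\tilde y_n+\tilde x_n=t_{n+1}+s_{n+1}=1$, both inherited from the identities $x_n+y_n=1$, $s_n+t_n=1$ already established in Theorem \ref{thm1}. For the product relations, substituting \eqref{xysr2} into \eqref{LUd} turns the three equations into $p_n=x_n s_{n+1}$, $q_n=y_n t_n$, and $r_n=y_n s_n+x_n t_{n+1}$, which are exactly the UL relations \eqref{ULd}; hence they return the coefficients \eqref{coefTTRR} by Theorem \ref{thm1}. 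Nonnegativity of all entries under the regimes of Proposition \ref{prop1} is then inherited directly, since the $\tilde\cdot$ quantities are just the (possibly index-shifted) $x_n,y_n,s_n,t_n$.

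In effect the LU factors are the UL factors of Theorem \ref{thm1} with their roles interchanged and the $P_L$-type weights shifted by one index. I expect no genuine obstacle: the content is almost entirely bookkeeping on top of Theorem \ref{thm1}. The single point to watch is the index alignment in \eqref{xysr2} — specifically that the shift $n\mapsto n+1$ in $\tilde y_n$ and $\tilde x_n$ is exactly what converts the asymmetric middle relation $\tilde t_n\tilde x_{n-1}+\tilde s_n\tilde y_n$ of \eqref{LUd} into the symmetric expression $y_n s_n+x_n t_{n+1}$ of \eqref{ULd}. Once that matching is verified, the theorem follows with no new estimates beyond those used for the UL factorization.
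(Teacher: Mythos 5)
Your proposal is correct and follows essentially the same route as the paper, whose proof of Theorem \ref{thm2} simply says it is identical to that of Theorem \ref{thm1} but using \eqref{LUd}; you have merely written out the bookkeeping (the collapse of $[H',H]$ to a point and the check that substituting \eqref{xysr2} into \eqref{LUd} recovers the relations \eqref{ULd} and hence \eqref{coefTTRR}) that the paper leaves implicit. The index-shift verification you highlight is exactly the right detail to confirm, and it checks out.
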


\begin{proof}
Identical to the proof of Theorem \ref{thm1} but using \eqref{LUd}.
\end{proof}

\begin{remark}
It is possible to see that if we choose $t$ in the ranges described in Proposition \ref{prop1} for the regions $\bm A_1, \bm A_2, \bm B_1, \bm B_2$, then we will always have that $0<H<1$ and a stochastic UL or LU factorization will always be possible. On the contrary, in regions $\bm C_1, \bm C_2, \bm D_1, \bm D_2$ it is not possible to give a stochastic UL or LU factorization since $\alpha<0$ and the convergence of $H$ and/or $H'$ is not guaranteed.
\end{remark}

\begin{remark}
In \cite{dIJ2} we introduced the so-called \emph{reflecting-absorbing} (or RA) factorization. Instead of dividing the bilateral birth-death chain into a pure-birth and a pure-death process (as in the case of UL and LU factorization) we considered the first of the factors to be an absorbing process to the state 0 and the second factor to be a reflecting process from the state 0. In this case there will be 2 free parameters ($a$ and $x_0$) and in order to guarantee a stochastic RA factorization we need $a\geq H'$ and $x_0\geq1-H$. Since $a+x_0=1$ ($H=H'$) that means that $y_0=0,$ so the RA factorization will not be possible (see \cite{dIJ2} for details).
\end{remark}

\section{Stochastic Darboux transformations and the associated spectral matrices}\label{sec5}

Once we have a UL (or LU) factorization we can perform what is called a \emph{discrete Darboux transformation}. The Darboux transformation has a long history but probably the first reference of a discrete Darboux transformation like we study here appeared in \cite{MS} in connection with the Toda lattice. If $P=P_UP_L$ as in \eqref{QZUL}, then by inverting the order of the factors we obtain another tridiagonal matrix of the form $\widetilde P=P_LP_U$. The new coefficients of the matrix $\widetilde P$ are given by
\begin{align*}
\nonumber\tilde{p}_n&=s_nx_{n},\\
\label{coeffDLU}\tilde{r}_n&=t_nx_{n-1}+s_ny_n,\quad n\in\ZZ,\\
\nonumber\tilde{q}_n&=t_ny_{n-1}.
\end{align*}
The matrix $\widetilde{P}$ is stochastic, since the multiplication of two stochastic matrices is again a stochastic matrix. Actually, using \eqref{xysr1}, we have that $\tilde{p}_n, \tilde{r}_n, \tilde{q}_n$ are the coefficients $p_n, r_n, q_n$ in \eqref{coefTTRR} replacing $\alpha$ by $\alpha-1$. In other words,
$$
\tilde p_n=\left.p_n\right|_{\alpha=\alpha-1},\quad \tilde r_n=\left.r_n\right|_{\alpha=\alpha-1},\quad\tilde q_n=\left.q_n\right|_{\alpha=\alpha-1}.
$$
Therefore the new discrete-time bilateral birth-death chain $\{\widetilde Z_t : t=0,1,\ldots\}$ on the integers $\ZZ$ with coefficients $(\tilde{p}_n)_{n\in\ZZ}$, $(\tilde{r}_n)_{n\in\ZZ}$ and $(\tilde{q}_n)_{n\in\ZZ}$ is the same as the original birth-death chain $Z_t$ but replacing the parameter $\alpha$ by $\alpha-1$. The spectral matrix $\widetilde{\bm\Psi}$ associated with $\widetilde P$ is then given by 
$$
\widetilde{\bm\Psi}(x)=\left.\bm\Psi(x)\right|_{\alpha=\alpha-1},
$$ 
where $\bm\Psi$ is the spectral matrix \eqref{psiw}. Observe that $\widetilde{\bm\Psi}$ is well-defined on $[0,1]$ since we are assuming that $\alpha>0$ in order to have a stochastic UL factorization (see Theorem \ref{thm1}). Also $\widetilde{\bm\Psi}$ does not have a discrete part, just as $\bm\Psi$. Another representation of $\widetilde{\bm\Psi}$ comes from \cite[Theorem 3.5]{dIJ1}, given in terms of a \emph{Geronimus transformation} of the spectral matrix $\bm\Psi$. This representation is given by
$$
\widetilde{\bm\Psi} (x)=\bm S_0(x) \bm\Psi_S (x) \bm S_0^*(x),
$$
where (see \eqref{xysr1})
$$
\bm S_0(x)=\begin{pmatrix} s_0& t_0 \\ -\D\frac{x_{-1}s_0}{y_{-1}}& \D\frac{x-x_{-1}t_0}{y_{-1}}\end{pmatrix}=\begin{pmatrix} \D\frac{t+\alpha+\beta}{2t+\alpha+\beta}& \D\frac{t}{2t+\alpha+\beta} \\ -\D\frac{(t+\beta)(t+\alpha+\beta)}{(2t+\alpha+\beta)(t+\alpha-1)}& \D\frac{2t+\alpha+\beta-1}{t+\alpha-1}x-\frac{t(t+\beta)}{(2t+\alpha+\beta)(t+\alpha-1)}\end{pmatrix},
$$
and
$$
\bm\Psi_S(x)=\frac{y_0}{s_0}\frac{\bm\Psi(x)}{x}+ \left[\begin{pmatrix}1/s_0&0 \\ 0 &1/t_0 \end{pmatrix}- \frac{y_0}{s_0}\bm M_{-1}\right] \delta_0(x).
$$
Here $\bm M_{-1}=\int_{0}^{1} x^{-1}\bm\Psi(x)dx$. Since $\widetilde{\bm\Psi}$ does not have discrete spectrum that means that the matrix in front of $\delta_0(x)$ in $\widetilde{\bm\Psi}$ should be the null matrix. In \cite[Theorem 3.5]{dIJ1} it is assumed that $\bm M_{-1}$ is well-defined (entry by entry). But a simple computation shows that in general $\bm M_{-1}$ is not well-defined for $\alpha>0$. However, this assumption is actually too restrictive. It is enough to assume that $\bm S_0(0) \bm M_{-1} \bm S_0^*(0)$ is well-defined. $\bm S_0(0)$ is in fact a singular matrix and after simplifications it turns out that the integral of $x^{-1}\bm S_0(0) \bm\Psi(x) \bm S_0^*(0)$ over $[0,1]$ is always well-defined. In fact we have
$$
\frac{y_0}{s_0}\bm S_0(0) \bm M_{-1} \bm S_0^*(0)=\begin{pmatrix}
1&-\D\frac{t+\beta}{t+\alpha-1}\\-\D\frac{t+\beta}{t+\alpha-1}&\left(\D\frac{t+\beta}{t+\alpha-1}\right)^2
\end{pmatrix}.
$$
A simple computation shows that 
$$
\bm S_0(0)\begin{pmatrix}1/s_0&0 \\ 0 &1/t_0 \end{pmatrix}\bm S_0^*(0)
$$ 
is the same matrix as before so we have that the matrix in front of $\delta_0(x)$ in $\widetilde{\bm\Psi}$ is the null matrix. Concluding, another representation of the spectral matrix $\widetilde{\bm\Psi}$ is given by
$$
\widetilde{\bm\Psi}(x)=\frac{y_0}{s_0}\bm S_0(x) \frac{\bm\Psi(x)}{x}\bm S_0^*(x).
$$
Finally, if we construct the matrix-valued polynomials $(\widetilde{\bm Q}_n)_{n\geq0},$ in the same way as we did in \eqref{2QMM} but using $\widetilde P$ instead of $P$, we have that 
\begin{equation*}\label{qusnorms}
\int_{0}^{1} \widetilde{\bm Q}_n (x) \widetilde{\bm\Psi} (x) \widetilde{\bm Q}_m^*(x) dx=\begin{pmatrix} 1/\tilde \pi_n &0\\ 0& 1/\tilde \pi_{-n-1}\end{pmatrix} \delta_{nm},
\end{equation*}
where $(\tilde \pi_n)_{n\in \mathbb{Z}}$ are the potential coefficients $(\pi_n)_{n\in \mathbb{Z}}$ defined by \eqref{potcoeff} replacing $\alpha$ by $\alpha-1$. The bilateral birth-death chain $\{\widetilde Z_t : t=0,1,\ldots\}$ associated with $\widetilde P$ will have a similar Karlin-McGregor representation formula as in \eqref{KMcG1} and it is always null recurrent  as the original one (see the paragraph before Remark \ref{rem32}).

\bigskip

The same can be done for the LU factorization \eqref{QZLU} of the form $P=\widetilde P_L\widetilde P_U$. By inverting the order of the factors we obtain another tridiagonal matrix of the form $\widehat P=\widetilde P_U\widetilde P_L$. The new coefficients of the matrix $\widehat P$ are given by (see \eqref{xysr2})
\begin{equation*}\label{coeffDUL}
\begin{split}
\hat{p}_n&=s_{n+1}x_{n+1},\\
\hat{r}_n&=s_{n+1}y_{n+1}+t_{n+1}x_n,\quad n\in\ZZ.\\
\hat{q}_n&=t_{n+1}y_n.
\end{split}
\end{equation*}
Again, the matrix $\widehat{P}$ is stochastic, so we have a new bilateral birth-death chain $\{\widehat Z_t : t=0,1,\ldots\}$ on the integers $\ZZ$ with coefficients $(\hat{p}_n)_{n\in\ZZ}$, $(\hat{r}_n)_{n\in\ZZ}$ and $(\hat{q}_n)_{n\in\ZZ}$. A simple computation, using \eqref{xysr2} and \eqref{xysr1}, gives
$$
\hat p_n=\left.p_{n+1}\right|_{\alpha=\alpha-1},\quad \hat r_n=\left.r_{n+1}\right|_{\alpha=\alpha-1},\quad\hat q_n=\left.q_{n+1}\right|_{\alpha=\alpha-1}.
$$
Now we do not have a priori a candidate for the spectral matrix $\widehat{\bm\Psi}$ associated with $\widehat P$. The previous shifted coefficients give a spectral matrix which is not as easily identificable as the previous case $\widetilde{\bm\Psi}$ of the UL factorization. However we can still apply \cite[Theorem 3.9]{dIJ1} to compute $\widehat{\bm\Psi}$ in terms of a Geronimus transformation of the spectral matrix $\bm\Psi$. $\widehat{\bm\Psi}$ is given by
$$
\widehat{\bm\Psi} (x)=\bm T_0(x) \bm\Psi_T (x) \bm T_0^*(x),
$$
where (see \eqref{xysr2} and \eqref{xysr1})
$$
\bm T_0(x)=\begin{pmatrix}  \D\frac{x-s_0y_0}{x_{0}}&  -\D\frac{y_0t_0}{x_0} \\ s_0&t_0\end{pmatrix}=\begin{pmatrix} \D\frac{2t+\alpha+\beta+1}{t+\beta+1}x-\frac{(t+\alpha)(t+\alpha+\beta)}{(2t+\alpha+\beta)(t+\beta+1)} &-\D\frac{t(t+\alpha)}{(2t+\alpha+\beta)(t+\beta+1)} \\ \D\frac{t+\alpha+\beta}{2t+\alpha+\beta} &\D\frac{t}{2t+\alpha+\beta} \end{pmatrix},
$$
and
$$
\bm\Psi_T(x)=\frac{x_0}{t_1}\frac{\bm\Psi(x)}{x}+ \left[\frac{x_0}{y_0t_1}\begin{pmatrix}1&0 \\ 0 &s_0/t_0 \end{pmatrix}- \frac{x_0}{t_1}\bm M_{-1}\right] \delta_0(x).
$$
The spectrum of $\widehat{P}$ is the same as the spectrum of $P$ (we are only shifting the coefficients one step forward and replacing $\alpha$ by $\alpha-1$). Therefore we should expect, as in the case of UL factorization, that the matrix in front of $\delta_0(x)$ in $\widehat{\bm\Psi}$ is the null matrix. Proceeding as before we have
$$
\frac{x_0}{t_1}\bm T_0(0) \bm M_{-1} \bm T_0^*(0)=\frac{(2t+\alpha+\beta+2)(t+\alpha+\beta)(t+\alpha)}{(t+1)(2t+\alpha+\beta)(t+\beta+1)}\begin{pmatrix}
1&-\D\frac{t+\beta+1}{t+\alpha}\\-\D\frac{t+\beta+1}{t+\alpha}&\left(\D\frac{t+\beta+1}{t+\alpha}\right)^2
\end{pmatrix},
$$
which is the same matrix as 
$$
\D\frac{x_0}{y_0t_1}\bm T_0(0)\begin{pmatrix}1&0 \\ 0 &s_0/t_0 \end{pmatrix}\bm T_0^*(0).
$$
Therefore the matrix in front of $\delta_0(x)$ in $\widehat{\bm\Psi}$ is the null matrix. As a consequence we have that the spectral matrix $\widehat{\bm\Psi}$ is given by
$$
\widehat{\bm\Psi}(x)=\frac{x_0}{t_1}\bm T_0(x) \frac{\bm\Psi(x)}{x}\bm T_0^*(x).
$$
Finally, if we construct the matrix-valued polynomials $(\widehat{\bm Q}_n)_{n\geq0}$ in the same way as we did in \eqref{2QMM} but using $\widehat P$ instead of $P$, we have that 
\begin{equation*}\label{qusnorms}
\int_{0}^{1} \widehat{\bm Q}_n (x) \widehat{\bm\Psi} (x) \widehat{\bm Q}_m^*(x) dx=\begin{pmatrix} 1/\hat \pi_n &0\\ 0& 1/\hat \pi_{-n-1}\end{pmatrix} \delta_{nm},
\end{equation*}
where now
$$
\hat\pi_n=\frac{(\alpha+\beta+t+1)_n(-t-1)_{-n}(2n+2t+\alpha+\beta+2)}{(\alpha+t+1)_n(-\beta-t-1)_{-n}(2t+\alpha+\beta+2)},\quad n\in\mathbb{Z}.
$$
The birth-death chain $\{\widehat Z_t : t=0,1,\ldots\}$ associated with $\widehat P$ will have a similar Karlin-McGregor representation formula as in \eqref{KMcG1} and again it is always null recurrent.

\begin{remark}
The new families of matrix-valued orthogonal polynomials $(\widetilde{\bm Q}_n)_{n\geq0}$  and $(\widehat{\bm Q}_n)_{n\geq0}$ constructed from the UL and LU Darboux transformations are also eigenfunctions of a matrix-valued second-order differential operator of the form \eqref{sode}. The coefficients and eigenvalues of the differential operator for the first family $(\widetilde{\bm Q}_n)_{n\geq0}$ are given by \eqref{CUVL} replacing $\alpha$ by $\alpha-1$. On the other hand, the coefficients and eigenvalues of the differential operator for the second family $(\widehat{\bm Q}_n)_{n\geq0}$ are given by \eqref{CUVL} replacing $\alpha$ by $\alpha-1$ and $t$ by $t+1$. In particular both families are \emph{bispectral}. In the scalar case, and choosing special values of the parameters involved, the order of the differential operator after a Darboux transformation is always higher than 2. In the matrix case we have that after one step of the Darboux transformation, the order of the differential operator can be the same as the original one. This phenomenon is not new and appeared for the first time in \cite{DdI2} using a method different than the Darboux transformation. For other examples of the bispectral property following a Darboux transformation see \cite{G3,GdI4}.
\end{remark}

\section{An urn model for the associated Jacobi polynomials}\label{sec6}

We now give an urn model for the associated Jacobi polynomials studied in the previous sections. For simplicity, we will restrict the parameters $\alpha$ and $\beta$ to the region $\bm A_1=\{\beta-\alpha+1>0,\alpha>-\beta,\beta<0\}$ given in Proposition \ref{prop1} (for the rest of regions we can proceed in a similar way). We recall from that proposition that, in order to have a stochastic matrix $P$, we need to choose the parameter $t$ in the real set \eqref{setA1}. Since $-1<\alpha<1,-1<\beta<0$ and $t$ is a real parameter, in order to find an urn model including numbers of nonnegative blue or red balls, in this section we will assume that
\begin{equation}\label{chov}
\alpha=\frac{1}{A},\quad\beta=-\frac{1}{B},\quad t=\frac{1}{T}+K,\quad A,B,T\in\mathbb{Z}_{\geq2},\quad K\in\mathbb{Z}_{\geq0}.
\end{equation}
On one side, the restriction on the region $\bm A_1$ is equivalent to $AB>A+B$ and $A<B$. On the other side, the restriction \eqref{setA1} gives two possibilities:
\begin{itemize}
\item $n<t<n-\beta,n\in\mathbb{Z}$. Substituting \eqref{chov} in the previous inequalities we get that we need $n<1/T+K<n+1/B,n\in\mathbb{Z}$. It turns out that, since $A,B,T\geq2$ and $K\geq0$ are nonnegative integers,  the only choice of $n\in\mathbb{Z}$ such that the previous both inequalities hold is for $n=K$, in which case we need that $T>B$. If $n<K$ then the first inequality is not possible and if $n>K$ then the second inequality does not hold.
\item $n-\alpha<t<n-\alpha-\beta,n\in\mathbb{Z}$. Substituting \eqref{chov} in the previous inequalities we get that we need $n-1/A<1/T+K<n-1/A+1/B,n\in\mathbb{Z}$. Again, it turns out that, since $A,B,T\geq2$ and $K\geq0$ are nonnegative integers,  the only choice of $n\in\mathbb{Z}$ such that the previous both inequalities hold is for $n=K$, in which case we need that $B<AT/(A+T)$. If $n<K$ then the first inequality is not possible and if $n>K$ then the second inequality does not hold.
\end{itemize}
We will choose $T$ according to the first possibility (for the second we can proceed in a similar way). In summary, our nonnegative parameters $A,B,T,K$ will be restricted to
\begin{equation}\label{condspar}
A,B,T\geq2,\quad K\geq0, \quad AB>A+B,\quad\mbox{and}\quad A<B<T.
\end{equation}

We focus now on the case of the UL stochastic factorization $P=P_UP_L$ in \eqref{QZUL} with coefficients $x_n,y_n,s_n,t_n,n\in\mathbb{Z}$ given by \eqref{xysr1}. Substituting \eqref{chov} in these coefficients we obtain
\begin{equation}\label{XYST1}
\begin{split}
y_n&=\frac{B(AT(n+K)+A+T)}{ABT(2n+2K+1)+2AB+T(B-A)},\quad x_n=\frac{A(BT(n+K+1)+B-T}{ABT(2n+2K+1)+2AB+T(B-A)},\\
s_n&=\frac{ABT(n+K)+AB+T(B-A)}{2ABT(n+K)+2AB+T(B-A)},\qquad\;\;\;\;\, t_n=\frac{AB(T(n+K)+1)}{2ABT(n+K)+2AB+T(B-A)},
\end{split}\qquad n\in\ZZ.
\end{equation}
To simplify the notation, let us call 
\begin{equation}\label{XYSTc}
\begin{split}
Y_n&=B(AT(n+K)+A+T),\quad X_n=A(BT(n+K+1)+B-T,\\
S_n&=ABT(n+K)+AB+T(B-A),\quad T_n=AB(T(n+K)+1),
\end{split}\quad n\in\mathbb{Z},
\end{equation}
so that we have 
$$
y_n=\frac{Y_n}{X_n+Y_n},\quad x_n=\frac{X_n}{X_n+Y_n},\quad s_n=\frac{S_n}{S_n+T_n},\quad t_n=\frac{T_n}{S_n+T_n},\quad n\in\mathbb{Z}.
$$
\begin{lemma}\label{lemxyrs}
Assume that we have that $A,B,T,K$ are nonnegative integers satisfying \eqref{condspar}. If $n+K\geq0$ then $Y_n,X_n,S_n,R_n\geq0$ for all $n\in\mathbb{Z}$ and if $n+K<0$ then $Y_n,X_n,S_n,R_n<0$ for all $n\in\mathbb{Z}$.
\end{lemma}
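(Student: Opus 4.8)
The plan is to reduce both implications to a sign check at two integer values, using a single monotonicity observation. Writing $m=n+K\in\mathbb{Z}$, the crucial remark is that each of the four quantities in \eqref{XYSTc} is \emph{affine in $m$ with the same strictly positive slope} $ABT$:
\begin{align*}
Y_n&=ABT\,m+B(A+T), & X_n&=ABT\,m+A(BT+B-T),\\
S_n&=ABT\,m+AB+T(B-A), & T_n&=ABT\,m+AB.
\end{align*}
Since $A,B,T\geq2$ we have $ABT>0$, so all four are strictly increasing functions of the integer $m$. Hence it is enough to examine the sign at the two boundary values $m=0$ and $m=-1$: positivity at $m=0$ propagates to every $m\geq0$, and negativity at $m=-1$ propagates to every $m\leq-1$, that is, to every $m<0$.

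First I would settle the case $n+K\geq0$. Here each quantity is bounded below by its value at $m=0$, namely $B(A+T)$, $A(BT+B-T)$, $AB+T(B-A)$ and $AB$. Each is positive using only $A,B,T\geq2$ and the ordering $A<B<T$ from \eqref{condspar}; for example $BT+B-T=T(B-1)+B>0$, and $AB+T(B-A)>0$ because $B>A$. This gives $Y_n,X_n,S_n,T_n\geq0$ whenever $n+K\geq0$ (with $T_n$ playing the role of the quantity denoted $R_n$ in the statement).

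Next I would treat $n+K<0$ by evaluating at $m=-1$, where the four quantities are $-B(AT-A-T)$, $A(B-T)$, $-(TB(A-1)+A(T-B))$, and $AB(1-T)$. The first is negative because $AT-A-T=(A-1)(T-1)-1>0$, as $A\geq2$ and $T>B>A$ force $(A-1)(T-1)\geq3$; the second because $B<T$; and the fourth because $T>1$. The one step that takes a moment is $S_n$ at $m=-1$: a short rearrangement yields $AB+T(B-A)-ABT=-(TB(A-1)+A(T-B))$, exhibiting it as minus a sum of two positive terms (positive since $A\geq2$ and $T>B$). This regrouping is the only genuinely nonroutine point; every other sign determination is immediate from \eqref{condspar}. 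Combining the two boundary evaluations with the monotonicity in $m$ completes the proof.
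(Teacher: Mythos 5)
Your proof is correct and is essentially the paper's argument in a slightly different guise: the paper computes the zero of each of the four affine functions of $n+K$ and checks that all four zeros lie strictly in $(-1,0)$, whereas you check positivity at $n+K=0$ and negativity at $n+K=-1$ and invoke monotonicity --- equivalent statements for increasing affine functions evaluated at integers. (You also correctly read $R_n$ in the statement as the $T_n$ of \eqref{XYSTc}, a typo in the paper.)
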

\begin{proof}
From \eqref{XYSTc} we have that $Y_n,X_n,S_n,R_n\geq0$ if
$$
n+K\geq\begin{cases}
\D-\frac{1}{T}-\frac{1}{A},&\mbox{for $Y_n$},\\\\
\D-1-\frac{1}{T}+\frac{1}{B},&\mbox{for $X_n$},\\\\
\D-\frac{1}{T}-\frac{1}{A}+\frac{1}{B},&\mbox{for $S_n$},\\\\
\D-\frac{1}{T}&\mbox{for $T_n$}.
\end{cases}
$$
A straightforward computation using \eqref{condspar} shows that
$$
-1<-1-\frac{1}{T}+\frac{1}{B}<-\frac{1}{T}-\frac{1}{A}+\frac{1}{B}<-\frac{1}{T}-\frac{1}{A}<-\frac{1}{T}<0.
$$
From the previous inequalities it is now easy to see that if $n+K\geq0$ then $Y_n,X_n,S_n,R_n\geq0$, while if $n+K<0$ then $Y_n,X_n,S_n,R_n<0$ for all $n\in\mathbb{Z}$.
\end{proof}

Let $\{Z_t: t= 0, 1,\ldots\}$ be the bilateral birth-death chain associated with the transition probability matrix $P$ \eqref{QZ}. 
Consider the UL stochastic factorization $P=P_UP_L$ in \eqref{QZUL}. We have that each one of the matrices $P_U$ and $P_L$ will give rise to an experiment in terms of an urn model, which we call Experiment 1 and Experiment 2, respectively. Let us call $\{Z_t^{(i)}: t= 0, 1,\ldots\}, i=1,2,$ the chains associated with $P_U$ and $P_L$, respectively. At times $t = 0, 1,\ldots,$ the state $n\in\mathbb{Z}$ in each of these chains will be given by the \emph{number of blue balls minus the number of red balls}. Therefore we may have nonnegative and negative integer states. We finally assume that the urn sits in a bath consisting of an infinite number of blue and red balls.

At the beginning of every experiment for $t = 0, 1,\ldots,$ we have to decide how many blue and red balls are going to be in the urn. This will depend on the state $n\in\mathbb{Z}$ according to the following rule:
\begin{enumerate}
\item If the initial state $n$ satisfies $n\geq-K$, then we initially put in the urn $n+K$ blue balls and $K$ red balls.
\item If the initial state $n$ satisfies $n<-K$, then we initially put in the urn $K$ blue balls and $-n+K$ red balls.
\end{enumerate}

\medskip

Experiment 1 (for $P_U$) will give a pure-birth chain on $\mathbb{Z}$ with diagram given by
\begin{center}
\includegraphics[scale=.3]{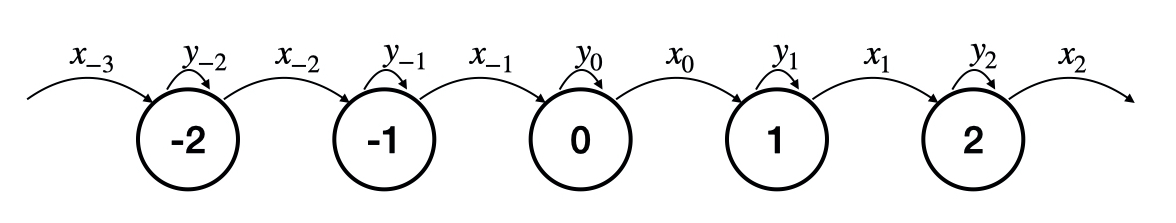}
\end{center}
Initially, $Z_0^{(1)}=n$. On one hand, if $n\geq-K$, then we place $n+K$ blue balls and $K$ red balls in the urn. After that we remove/add balls until we have $X_n$ blue balls and $Y_n$ red balls (both nonnegative integers by Lemma \ref{lemxyrs}). Draw one ball from the urn at random with the uniform distribution. We have two possibilities:
\begin{itemize}
\item If we get a blue ball (with probability $x_n$ in \eqref{XYST1}) then we remove/add balls until we have $n+K+1$ blue balls and $K$ red balls in the urn and start over. Then we have $Z_1^{(1)}=n+1$.
\item If we get a red ball (with probability $y_n$ in \eqref{XYST1}) then we remove/add balls until we have $n+K$ blue balls and $K$ red balls in the urn and start over. Then we have $Z_1^{(1)}=n$.
\end{itemize}
On the other hand, if $n<-K$, then we place $K$ blue balls and $-n+K$ red balls in the urn. After that we remove/add balls until we have $-X_n$ blue balls and $-Y_n$ red balls (both nonnegative integers by Lemma \ref{lemxyrs}). Draw one ball from the urn at random with the uniform distribution. We have two possibilities:
\begin{itemize}
\item If we get a blue ball (with probability $x_n$ in \eqref{XYST1}) then we remove/add balls until we have $K+1$ blue balls and $-n+K$ red balls in the urn and start over. Then we have $Z_1^{(1)}=n+1$.
\item If we get a red ball (with probability $y_n$ in \eqref{XYST1}) then we remove/add balls until we have $K$ blue balls and $-n+K$ red balls in the urn and start over. Then we have $Z_1^{(1)}=n$.

\end{itemize}

\medskip

Experiment 2 (for $P_L$) will give a pure-death chain on $\mathbb{Z}$ with diagram given by
\begin{center}
\includegraphics[scale=.3]{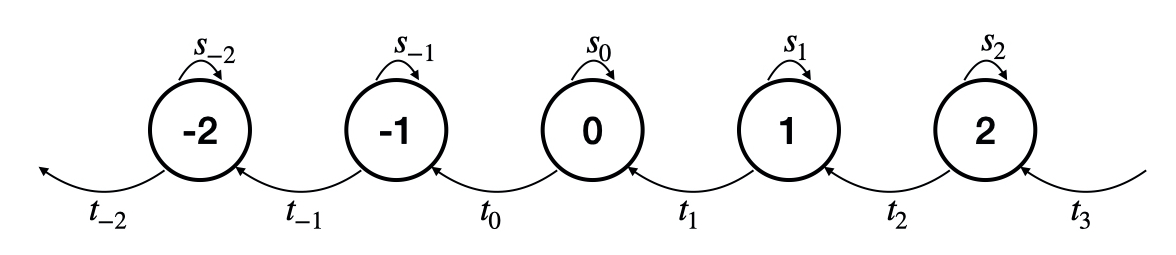}
\end{center}
Initially, $Z_0^{(2)}=n$. On one hand, if $n\geq-K$, then we place $n+K$ blue balls and $K$ red balls in the urn. After that we remove/add balls until we have $S_n$ blue balls and $T_n$ red balls (both nonnegative integers by Lemma \ref{lemxyrs}). Draw one ball from the urn at random with the uniform distribution. We have two possibilities:
\begin{itemize}
\item If we get a blue ball (with probability $s_n$ in \eqref{XYST1}) then we remove/add balls until we have $n+K$ blue balls and $K$ red balls in the urn and start over. Then we have $Z_1^{(2)}=n$.
\item If we get a red ball (with probability $t_n$ in \eqref{XYST1}) then we remove/add balls until we have $n+K$ blue balls and $K+1$ red balls in the urn and start over. Then we have $Z_1^{(2)}=n-1$.
\end{itemize}
On the other hand, if $n<-K$, then we place $K$ blue balls and $-n+K$ red balls in the urn. After that we remove/add balls until we have $-S_n$ blue balls and $-T_n$ red balls (both nonnegative integers by Lemma \ref{lemxyrs}). Draw one ball from the urn at random with the uniform distribution. We have two possibilities:
\begin{itemize}
\item If we get a blue ball (with probability $s_n$ in \eqref{XYST1}) then we remove/add balls until we have $K$ blue balls and $-n+K$ red balls in the urn and start over. Then we have $Z_1^{(2)}=n$.
\item If we get a red ball (with probability $t_n$ in \eqref{XYST1}) then we remove/add balls until we have $K$ blue balls and $-n+K+1$ red balls in the urn and start over. Then we have $Z_1^{(2)}=n-1$.
\end{itemize}

The urn model for $P$ (on $\mathbb{Z}$) is obtained by repeatedly alternating Experiments 1 and 2 in that order. For the benefit of the reader, we have included Figures \ref{fig2}, \ref{fig3} and \ref{fig4} below, that explains the 3 possible situations that we can find, given the value of the initial state $n\in\mathbb{Z}$. Figures \ref{fig2} and \ref{fig3} are the diagrams for the cases where $n\geq-K$ and $n<-K-1$, respectively, where the number of blue and red balls at the beginning of Experiment 2 does not change. Figure \ref{fig4}, for $n=-K-1$ is the only case where at the beginning of Experiment 2 (when the state is $-K$) we need to place a different number of blue and red balls.

\begin{figure}[h]
\begin{center}
\vspace{-0.0cm}
\includegraphics[height=10cm]{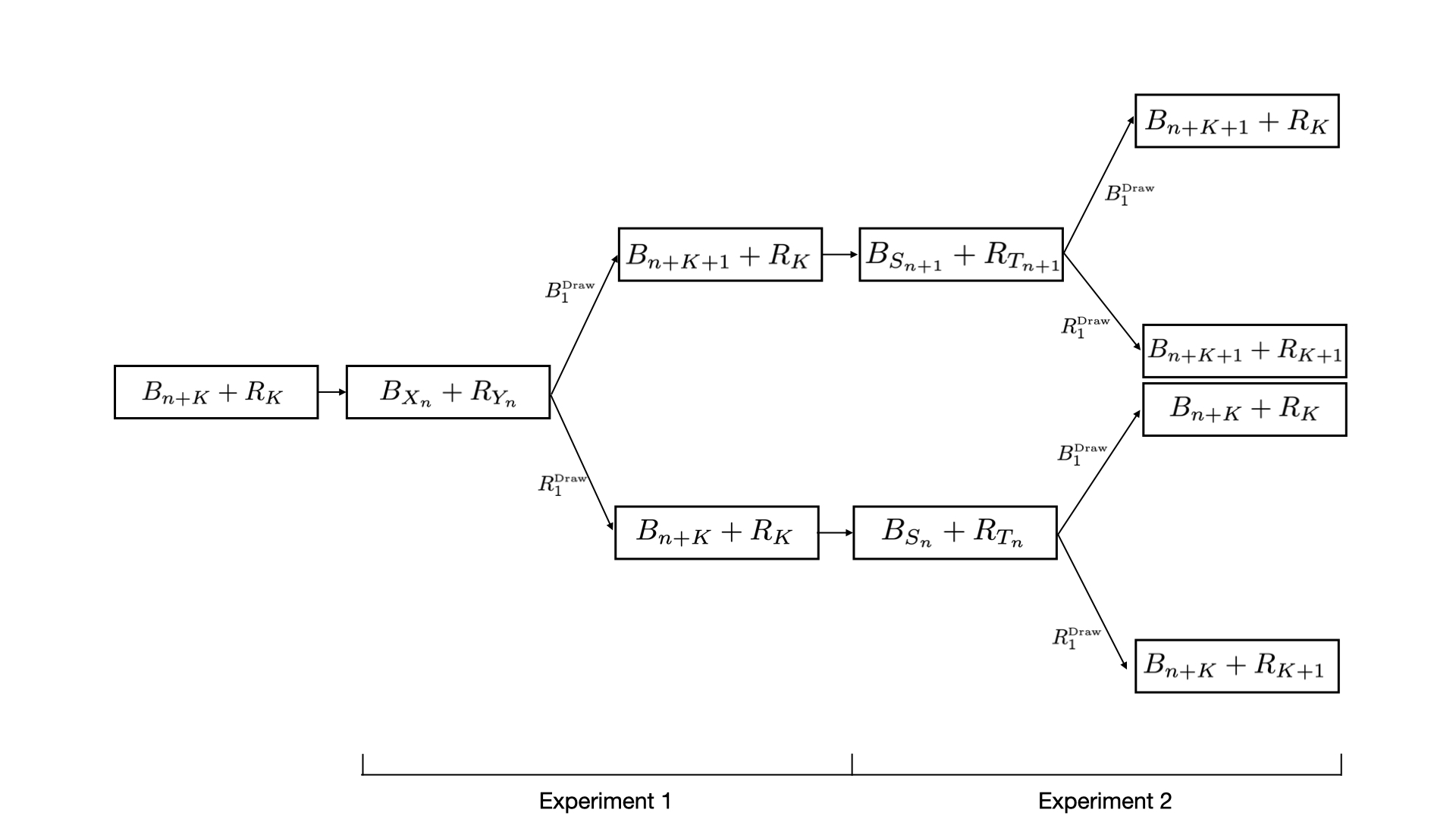}
\end{center}
\vspace{-0.2cm}
	\caption{A schematic of Experiments 1 and 2 when $n\geq-K$. The boxed regions represent the number of blue balls $B_{b}$ and red balls $R_{r}$ contained within the urn, so that the state of the system is $n=b-r$. When a ball is drawn from an urn, this is indicated by $B_1^{\tiny{\mbox{Draw}}}$ or $R_1^{\tiny{\mbox{Draw}}}$.}
\label{fig2}
\end{figure}

\begin{figure}[h]
\begin{center}
\vspace{-0.0cm}
\includegraphics[height=10cm]{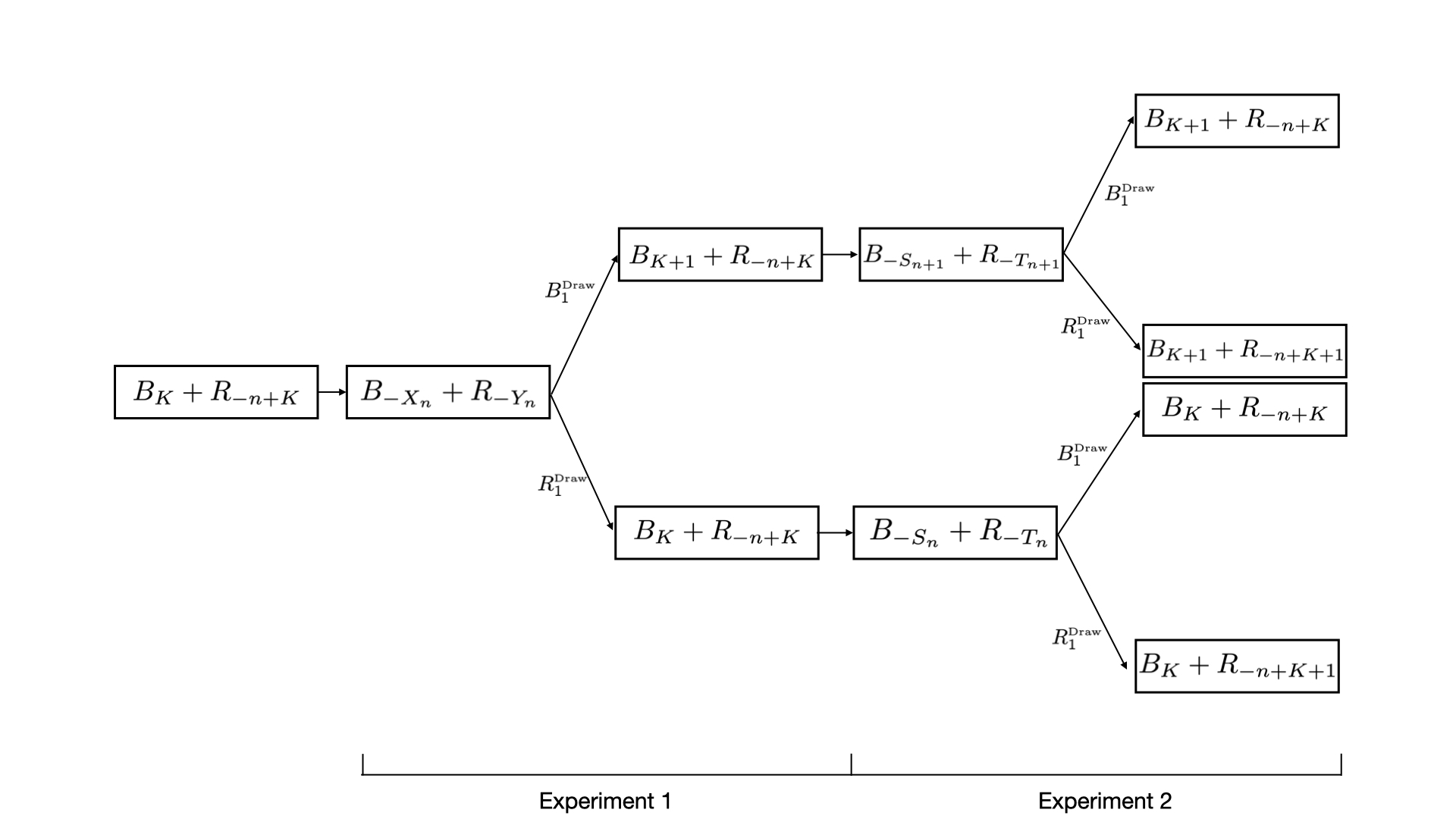}
\end{center}
\vspace{-0.2cm}
	\caption{A schematic of Experiments 1 and 2 when $n<-K-1$. The boxed regions represent the number of blue balls $B_{b}$ and red balls $R_{r}$ contained within the urn, so that the state of the system is $n=b-r$. When a ball is drawn from an urn, this is indicated by $B_1^{\tiny{\mbox{Draw}}}$ or $R_1^{\tiny{\mbox{Draw}}}$.}
\label{fig3}
\end{figure}

\begin{figure}[h]
\begin{center}
\vspace{-0.0cm}
\includegraphics[height=10cm]{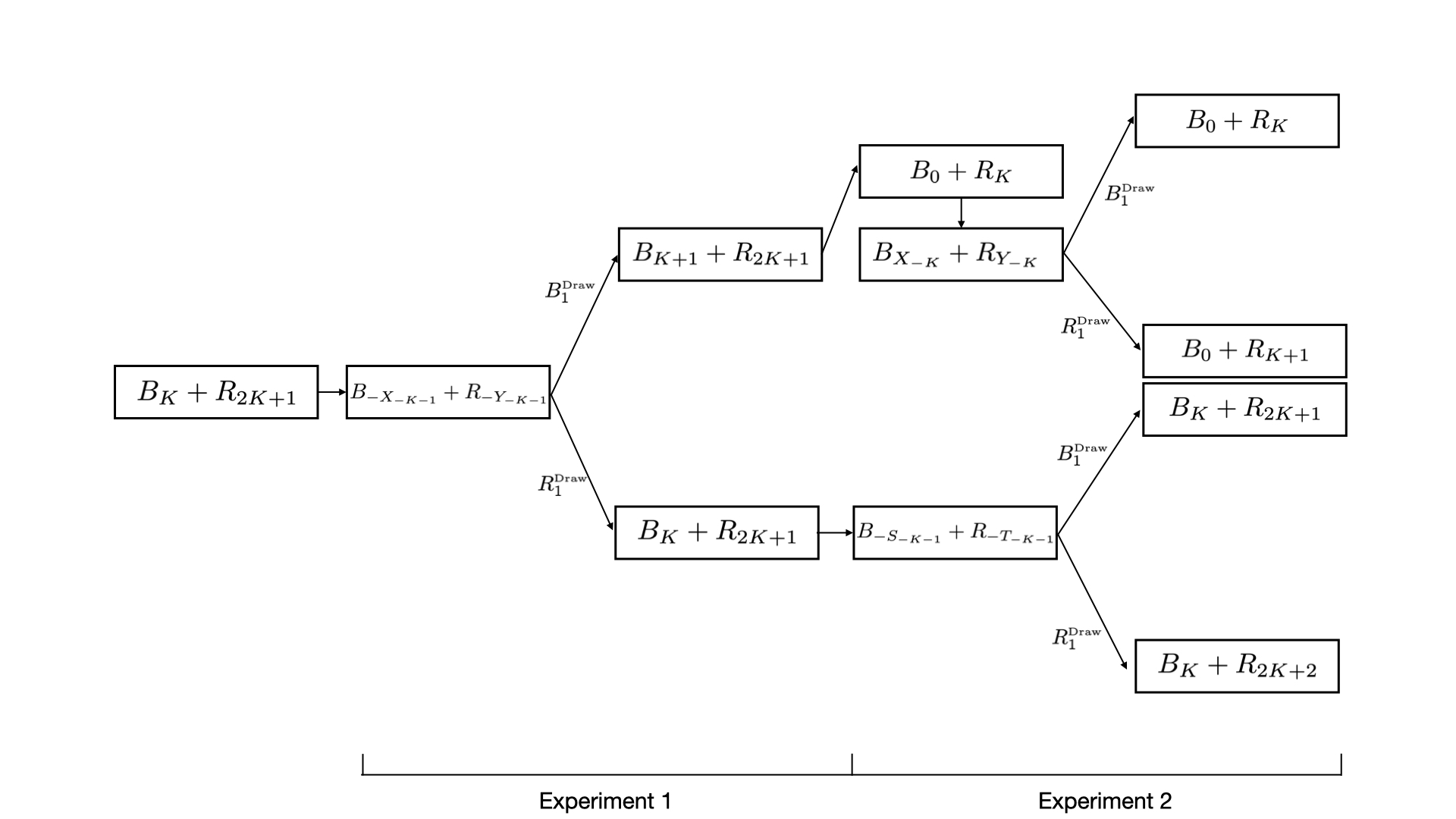}
\end{center}
\vspace{-0.2cm}
	\caption{A schematic of Experiments 1 and 2 when $n=-K-1$. The boxed regions represent the number of blue balls $B_{b}$ and red balls $R_{r}$ contained within the urn, so that the state of the system is $n=b-r$. When a ball is drawn from an urn, this is indicated by $B_1^{\tiny{\mbox{Draw}}}$ or $R_1^{\tiny{\mbox{Draw}}}$.}
\label{fig4}
\end{figure}

In a similar way the urn model for the Darboux transformation $\widetilde{P}=P_LP_U$ proceeds in the reversed order. Similar urn models can be derived for the LU factorization \eqref{QZLU} with small modifications.

\end{document}